\date{\today}
\newcommand{\Z}{{\mathbb Z}}
\newcommand{\R}{{\mathbb R}}
\newcommand{\C}{{\mathbb C}}
\newcommand{\D}{{\mathbb D}}
\newcommand{\N}{{\mathbb N}}
\newcommand{\CA}{{\mathcal{A}}}
\newcommand{\CC}{{\mathcal{C}}}
\newcommand{\CE}{{\mathcal{E}}}
\renewcommand{\Re}{{\mathrm{Re} \,}}
\renewcommand{\Im}{{\mathrm{Im} \,}}
\newcommand{\ac}{{\mathrm{ac}}}
\newcommand{\sing}{{\mathrm{s}}}
\newcommand{\Leb}{{\mathrm{Leb}}}
\newtheorem{theorem}{Theorem}[section]
\newtheorem{lemma}[theorem]{Lemma}
\newtheorem{coro}[theorem]{Corollary}
\theoremstyle{definition}
\newtheorem{definition}[theorem]{Definition}
\newtheorem{remark}[theorem]{Remark}
\theoremstyle{plain}
\allowdisplaybreaks \numberwithin{equation}{section}
\newcommand{\set}[1]{\left\{#1\right\}}
\begin{document}

\title{Subordinacy Theory for Extended CMV Matrices}

\author[S.\ Guo]{Shuzheng Guo}
\address{Ocean University of China, Qingdao 266100, Shandong, China and Rice University, Houston, TX~77005, USA}
\email{gszouc@gmail.com}
\thanks{S.G.\ was supported by CSC (No. 201906330008) and NSFC (No. 11571327)}

\author[D.\ Damanik]{David Damanik}
\address{Department of Mathematics, Rice University, Houston, TX~77005, USA}
\email{damanik@rice.edu}
\thanks{D.D.\ was supported in part by NSF grant DMS--1700131 and by an Alexander von Humboldt Foundation research award}
\author[D.\ Ong]{Darren C.\ Ong}
\address{Department of Mathematics, Xiamen University Malaysia, 43900 Sepang, Selangor Darul Ehsan, Malaysia}
\email{darrenong@xmu.edu.my}
\thanks{D.O.\ was supported in part by a grant from the Fundamental Research Grant Scheme from the Malaysian Ministry of Education (Grant No: FRGS/1/2018/STG06/XMU/02/1) and a Xiamen University Malaysia Research Fund (Grant Number: XMUMRF/2020-C5/IMAT/0011)}

\begin{abstract}
We develop subordinacy theory for extended CMV matrices. That is, we provide explicit supports for the singular and absolutely continuous parts of the canonical spectral measure associated with a given extended CMV matrix in terms of the presence or absence of subordinate solutions of the generalized eigenvalue equation. Some corollaries and applications of this result are described as well.
\end{abstract}

\maketitle

\section{Introduction}

Subordinacy theory was first developed in the setting of continuum half-line Schr\"odinger operators by Gilbert and Pearson \cite{GP87}. Its primary aim is to relate the spectral decomposition of the operator in question to the behavior of the solutions of the associated generalized eigenvalue equation. The following correspondence is obvious: a value $E$ of the spectral parameter is an eigenvalue of the operator $H$ in question if and only if the equation $H u = Eu$ admits a non-zero solution $u$ that belongs to the domain of the operator. Modulo a suitable regularity property, this means that $u$ satisfies the designated boundary condition at the origin and is square-integrable at $+ \infty$. Since the pure point part of any spectral measure of $H$ is supported by the set of eigenvalues, it follows that we can extract the pure point part of any spectral measure by restricting this measure to the set of $E$'s for which the solution that obeys the boundary condition at the origin is square-integrable. Similarly, we extract the continuous part by restriction to the set of $E$'s for which the solution that obeys the boundary condition at the origin is not square-integrable. Gilbert-Pearson's subordinacy theory provides a similar partition related to the split between the singular part and the absolutely continuous part of a spectral measure: the crucial question is now whether the solution that obeys the boundary condition at the origin is or is not subordinate.

A follow-up paper by Gilbert \cite{G89} developed subordinacy theory for continuum Schr\"odinger operators on the whole line, and the resulting theory is completely analogous if one replaces ``obeying the boundary condition at the origin'' by ``being square-integrable/subordinate at $-\infty$'' in the discussion above.

Subsequently, subordinacy theory was developed in other settings as well: for Jacobi matrices by Khan and Pearson \cite{KP92} and for CMV matrices by Simon \cite{S05}. Furthermore there were simplifications and extensions of subordinacy theory by Remling \cite{R97}, Jitomirskaya-Last \cite{JL99, JL00}, Damanik-Killip-Lenz \cite{DKL00}, and Killip-Kiselev-Last \cite{KKL03}.

The papers mentioned above establish subordinacy theory for half-line and whole-line Schr\"odinger operators, for half-line and whole-line Jacobi matrices, and for standard (i.e., half-line) CMV matrices. There is no subordinacy theory yet for extended (i.e., whole-line) CMV matrices, and it is the purpose of this paper to fill this gap in the literature.

Thus we are naturally motivated by the fact that subordinacy theory is a fundamental result to be established for any operator family for which such a theory exists. It is usually the most convenient way to perform a spectral analysis of a given operator, precisely because the behavior of generalized eigenfunctions is easier to study than other properties of the operator in question that are relevant to the identification of its spectral type. We expect our work to be useful in the study of spectral properties for many classes of extended CMV matrices.

The remainder of this paper is structured as follows. We describe the setting, the main result, and some consequences of it in Section~\ref{s.2}. Some known results that will be used in the proofs are presented in Section~\ref{s.3}. Section~\ref{s.4} develops the version of the Jitomirskaya-Last inequalities from \cite{S05} that we need to analyze the left half-line of a given extended CMV matrix. The main subordinacy result is then proved in Section~\ref{s.5} and its applications are discussed in Section~\ref{s.6}.

\section{Setting and Main Result}\label{s.2}

In this section we describe the setting in which we work and state the main result, a description of support of the parts of spectral measures of extended CMV matrices in terms of solutions, along with some corollaries. We refer the reader to \cite{S04, S05} for general background, and we follow largely the notation from these monographs.

Let $\mu$ be a non-trivial probability measure on the unit circle $\partial\mathbb{D}=\{z\in\mathbb{C}:|z|=1\}$,
which means the support of $\mu$ contains infinitely many points. By the non-triviality assumption, the functions 1,$z$, $z^2,\cdots$ are linearly independent in the Hilbert space $\mathcal{H} = L^2(\partial\mathbb{D}, d\mu)$, and hence one can form, by the Gram-Schmidt procedure, the \emph{monic orthogonal polynomials} $\Phi_n(z)$, whose Szeg\H{o} dual is defined by $\Phi_n^{*} = z^n\overline{\Phi_n({1}/{\overline{z}})}$. There are constants $\{\alpha_n\}_{n\in\N}$ in $\mathbb{D}=\{z\in\mathbb{C}:|z|<1\}$, called the \emph{Verblunsky coefficients}, so that
\begin{equation}\label{eq01}
\Phi_{n+1}(z) = z \Phi_n(z) - \overline{\alpha}_n \Phi_n^*(z), \qquad \textrm{ for } n\in \N,
\end{equation}
which is the so-called \emph{Szeg\H{o} recurrence}. Conversely, every sequence $\{\alpha_n\}_{n\in\N}$ in $\mathbb{D}$ arises as a sequence of recurrence coefficients corresponding to a Gram-Schmidt procedure on a nontrivial probability measure on $\partial\mathbb D$.

In fact, if we normalize the monic orthogonal polynomials $\Phi_n(z)$ by
$$\varphi(z, n)=\frac{\Phi_n(z)}{\|\Phi_n(z)\|_{\mu}},$$
where $\|\cdot\|_{\mu}$ is the norm of $\mathcal{H}$. It is easy to see that ~(\ref{eq01}) is equivalent to
\begin{equation*}
\rho_n(x)\varphi(z , n+1 ) = z \varphi (z , n) - \overline{\alpha}_n \varphi^*(z , n).
\end{equation*}

Define
\begin{equation}\label{e.salphazdef}
S (\alpha, z)= \frac{1}{\rho}
\left(
\begin{matrix}
z & -\overline{\alpha}\\
-\alpha z & 1
\end{matrix}
\right),
\end{equation}
where $\rho = (1 - |\alpha|^2)^{1/2}$.

The Szeg\H{o} recursion can be written in a matrix form as follows:
\begin{equation}{\label{eq25}}
\left(
\begin{matrix}
\varphi(z , n+1)\\
\varphi^{*}(z , n+1)
\end{matrix}
\right)
=
S(\alpha_n, z)
\left(
\begin{matrix}
\varphi(z , n)\\
\varphi^{*}(z , n)
\end{matrix}
\right).
\end{equation}
Alternatively, one can consider a different initial condition and derive the \emph{orthogonal polynomials of the second kind}, by setting $\psi(z , 0) = 1$ and then
\begin{equation*}
\left(
\begin{matrix}
\psi(z , n+1)\\
- \psi^{*}(z , n+1)
\end{matrix}
\right)
=
S(\alpha_n, z)
\left(
\begin{matrix}
\psi(z , n)\\
- \psi^{*}(z , n)
\end{matrix}
\right).
\end{equation*}

The orthogonal polynomials may or may not form a basis of $\mathcal{H}$. However, if we apply the Gram-Schmidt procedure to $1, z, z^{-1}, z^2, z^{-2}, \ldots$, we will obtain a basis -- called the \emph{CMV basis}. In this basis, multiplication by the independent variable $z$ in $\mathcal{H}$ has the matrix representation
\begin{equation}{\label{CMV}}
\mathcal{C}=\left(
\begin{matrix}
\overline{\alpha}_0&\overline{\alpha}_1\rho_{0}&\rho_1\rho_0&0&0&\cdots\\
\rho_0&-\overline{\alpha}_1\alpha_{0}&-\rho_1\alpha_0&0&0&\cdots\\
0&\overline{\alpha}_2\rho_{1}&-\overline{\alpha}_2\alpha_{1}&\overline{\alpha}_3\rho_2&\rho_3\rho_2&\cdots\\
0&\rho_2\rho_{1}&-\rho_2\alpha_{1}&-\overline{\alpha}_3\alpha_2&-\rho_3\alpha_2&\cdots\\
0&0&0&\overline{\alpha}_4\rho_3&-\overline{\alpha}_4\alpha_3&\cdots\\
\cdots&\cdots&\cdots&\cdots&\cdots&\cdots
\end{matrix}
\right),
\end{equation}
where $\alpha = \set{\alpha_{n}}_{n\in\N} \subset \D$ and $\rho_{n} = \sqrt{1 - |\alpha_{n}|^2}$, for $n\in \N$. A matrix of this form is called a \emph{CMV matrix}.

Furthermore, an \emph{extended CMV matrix} is a special five-diagonal doubly infinite matrix in the standard basis of $\ell^2(\mathbb{Z})$ according to \cite[Section 4.5]{S04} and \cite[Section 10.5]{S05}, written as
\begin{equation}{\label{eCMV}}
\mathcal{E}=\left(
\begin{matrix}
\cdots&\cdots&\cdots&\cdots&\cdots&\cdots&\cdots\\
\cdots&-\overline{\alpha}_0\alpha_{-1}&\overline{\alpha}_1\rho_{0}&\rho_1\rho_0&0&0&\cdots&\\
\cdots&-\rho_0\alpha_{-1}&-\overline{\alpha}_1\alpha_{0}&-\rho_1\alpha_0&0&0&\cdots&\\
\cdots&0&\overline{\alpha}_2\rho_{1}&-\overline{\alpha}_2\alpha_{1}&\overline{\alpha}_3\rho_2&\rho_3\rho_2&\cdots&\\
\cdots&0&\rho_2\rho_{1}&-\rho_2\alpha_{1}&-\overline{\alpha}_3\alpha_2&-\rho_3\alpha_2&\cdots&\\
\cdots&0&0&0&\overline{\alpha}_4\rho_3&-\overline{\alpha}_4\alpha_3&\cdots&\\
\cdots&\cdots&\cdots&\cdots&\cdots&\cdots&\cdots
\end{matrix}\right),
\end{equation}
where $\alpha = \set{\alpha_{n}}_{n\in\Z} \subset \D$ and $\rho_{n} = \sqrt{1 - |\alpha_{n}|^2}$, for $n\in \Z$. In some settings it is more natural to consider extended CMV matrices, rather than standard CMV matrices. This is the case, for example, when the Verblunsky coefficients are generated by an invertible ergodic dynamical system. This class of coefficients contains the important special cases of almost periodic and random coefficients and some important parts of the theory for ergodic coefficients, such as for example Kotani theory \cite{FDG, GT94, S05}, require the consideration of the two-sided case.

The main goal of this paper is to provide a general approach to the study of the spectral properties of a given extended CMV matrix $\CE$ via the properties of the solutions of the associated generalized eigenvalue equation. To this end, let us first discuss the canonical spectral measure and then the generalized eigenvalue equation

Given an extended CMV matrix $\mathcal{E}$, the canonical spectral measure $\Lambda$ is given by the sum of the spectral measures of $\CE$ relative to the vectors $\delta_0, \delta_1$. It is well known that $\{ \delta_0, \delta_1 \}$ forms a spectral basis for the operator $\mathcal{E}$ (see, e.g.\ \cite[Lemma~3]{MO14}) and hence for every $\psi \in \ell^2(\Z)$, the spectral measure corresponding to $\mathcal{E}$ and $\psi$ is absolutely continuous with respect to $\Lambda$.

Consider the Lebesgue decomposition of $\Lambda$ into its pure point, singular continuous, and absolutely continuous parts,
$$
\Lambda = \Lambda_\mathrm{pp} + \Lambda_\mathrm{sc} + \Lambda_\mathrm{ac}.
$$
That is, $\Lambda_\mathrm{pp}$ is supported by a countable set, $\Lambda_\mathrm{sc}$ gives no weight to countable sets but is supported by some set of zero Lebesgue measure, and $\Lambda_\mathrm{ac}$ gives no weight to sets of zero Lebesgue measure.
Here we refer to the standard arc length measure on $\partial \D$ as the Lebesgue measure on $\partial \D$.

We also consider the singular part of $\Lambda$,
$$
\Lambda_\mathrm{s} = \Lambda_\mathrm{pp} + \Lambda_\mathrm{sc}
$$
and the continuous part of $\Lambda$,
$$
\Lambda_\mathrm{c} = \Lambda_\mathrm{sc} + \Lambda_\mathrm{ac}.
$$

Consider the corresponding eigenvalue equation
\begin{equation}{\label{eq1}}
\mathcal{E} u = z u,
\end{equation}
with boundary conditions subjected by
\begin{equation}{\label{eq21}}
\left(
\begin{matrix}
\varphi_{\omega}(0) & \psi_{\omega}(0)\\
\varphi^{*}_{\omega}(0) & -\psi^{*}_{\omega}(0)
\end{matrix}
\right)
=
\left(
\begin{matrix}
\cos\omega + i \sin\omega & \cos \omega + i \sin\omega\\
\cos \omega - i \sin \omega & -\cos \omega + i \sin \omega
\end{matrix}
\right).
\end{equation}

Here is the fundamental definition of subordinacy, introduced by Gilbert and Pearson \cite{GP87} in the Schr\"odinger case, adapted to the CMV setting:

\begin{definition}{\label{d.subordinatesolution}}
(a) Define for a sequence $a_{0}, a_{1}, \ldots$ and $x \in (0, \infty)$,
\[
\| a \|^2_{x} = \sum_{j=0}^{[x]} |a_{j}|^2 + (x - [x])|a_{j+1}|^2,
\]
where $[x] = $ greatest integer less than or equal to $x$. An analogous expression defines $\| a \|^2_{x}$ for $a_{-1}, a_{-2}, \ldots$ and $x \in (-\infty,-1)$: define $\lceil x\rceil $ as the greatest integer greater than or equal to $x$. Then
	\[
\| a \|^2_{x} = \sum_{j=1}^{-\lceil x\rceil} |a_{-j}|^2 + (\lceil x\rceil - x )|a_{-j-1}|^2.
\]
(b) Let $z\in\partial \D$. A solution $u$ of \eqref{eq1} is called \emph{subordinate at $+\infty$} if it does not vanish identically and obeys
$$
\lim_{x\rightarrow +\infty} \frac{\|u\|_x}{\|p\|_x} = 0,
$$
for any linearly independent solution $p$ of \eqref{eq1}.

Similarly, a solution $u$ of \eqref{eq1} is called \emph{subordinate at $-\infty$} if it does not vanish identically and obeys
$$
\lim_{x\rightarrow -\infty} \frac{\|u\|_x}{\|p\|_x} = 0,
$$
for any linearly independent solution $p$ of \eqref{eq1}.
\end{definition}

We are now ready to state the main result of this paper:

\begin{theorem}{\label{t.main1rev}}
Let $\mathcal{E}$ be an extended CMV matrix in $\ell^2(\mathbb{Z})$ and denote by $\Lambda$ its canonical spectral measure. Then, the three parts of the canonical spectral measure have the following supports defined in terms of the behavior of the solutions of \eqref{eq1}:
\begin{itemize}

\item[{\rm (a)}] Let
$$
\mathcal{P} = \left\{ z \in \partial \D : \eqref{eq1} \textrm{ has a solution that is square-summable at both } \pm\infty \right\}.
$$
Then $\Lambda_\mathrm{pp}(\partial \D \setminus \mathcal{P}) = 0$ and $\Lambda_\mathrm{c}(\mathcal{P}) = 0$.

\item[{\rm (b)}] Let
$$
\mathcal{S} = \left\{ z \in \partial \D : \eqref{eq1} \textrm{ has a solution that is subordinate at both } \pm\infty \right\}.
$$
Then $\Lambda_\mathrm{s}(\partial \D \setminus \mathcal{S}) = 0$ and $\Lambda_\mathrm{ac}(\mathcal{S}) = 0$. In particular, we have that $\Lambda_\mathrm{sc}(\partial \D \setminus (\mathcal{S} \setminus \mathcal{P})) = 0$ and $(\Lambda_\mathrm{pp} + \Lambda_\mathrm{ac})(\mathcal{S} \setminus \mathcal{P}) = 0$.

\item[{\rm (c)}] Let
\begin{equation}\label{e.apmdef}
\mathcal{A}_{\pm} = \left\{ z \in \partial \D : \eqref{eq1} \textrm{ has no solution that is subordinate at }\pm \infty \right\}
\end{equation}
and
$$
\mathcal{A} = \mathcal{A}_{+} \cup \mathcal{A}_{-}.
$$
Then $\Lambda_\mathrm{ac}(\partial \D \setminus \mathcal{A}) = 0$ and $\Lambda_\mathrm{s}(\mathcal{A}) = 0$. Moreover, $\mathcal{A}$ is an essential support of $\Lambda_{\mathrm{ac}}$, that is, for any measurable set $\mathcal{A}'$ with $\Lambda_{\mathrm{ac}}(\partial\D \backslash \mathcal{A}') = 0$, we have $\mathrm{Leb}(\mathcal{A} \backslash \mathcal{A}') = 0$.
\end{itemize}
\end{theorem}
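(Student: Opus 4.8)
The plan is to reduce the whole-line problem to two half-line problems and then feed these into the half-line subordinacy theory of Simon \cite{S05} together with the left half-line Jitomirskaya--Last inequalities developed in Section~\ref{s.4}. First I would decouple $\mathcal{E}$ at the origin, imposing the boundary condition \eqref{eq21}, to obtain an orthogonal sum $\mathcal{C}_{+}\oplus\mathcal{C}_{-}$ of a right half-line CMV matrix $\mathcal{C}_{+}$ on $\ell^2(\{0,1,2,\dots\})$ (Verblunsky coefficients $\{\alpha_n\}_{n\ge 0}$) and a left half-line CMV matrix $\mathcal{C}_{-}$ on $\ell^2(\{\dots,-2,-1\})$. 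This decoupled operator differs from $\mathcal{E}$ by a finite-rank modification of the pentadiagonal band, so the whole-line data is tied to the two half-line Carath\'eodory (Weyl--Titchmarsh) functions $F_{+}$ and $F_{-}$. Part (a) can then be dispatched directly: since $\mathcal{E}$ is unitary and $\{\delta_0,\delta_1\}$ is a spectral basis, an $\ell^2(\Z)$ solution of \eqref{eq1} is a genuine eigenvector, so $\mathcal{P}$ is exactly the set of eigenvalues of $\mathcal{E}$, and the atoms of $\Lambda$ coincide with these eigenvalues. As $\mathcal{E}$ is unitary on a separable space, $\mathcal{P}$ is at most countable; hence $\Lambda_\mathrm{pp}(\partial\D\setminus\mathcal{P})=0$ while $\Lambda_\mathrm{c}(\mathcal{P})=0$ because a continuous measure gives no mass to a countable set.

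Next I would assemble the half-line dictionary. For each sign, Simon's half-line subordinacy theorem (right half-line, \cite{S05}) and the Jitomirskaya--Last inequalities (left half-line, Section~\ref{s.4}) compare, for fixed $z=e^{i\theta}$ and $r\uparrow 1$, the boundary quantity $\Re F_{\pm}(re^{i\theta})$ with the ratio of the norms $\|\cdot\|_{L}$ of the two standard solutions (the normalized orthogonal polynomials and the second-kind polynomials) at a length scale $L=L(r)\sim(1-r)^{-1}$. This produces the translation I need: up to sets of Lebesgue measure zero, $\theta\in\mathcal{A}_{\pm}$, i.e.\ there is no subordinate solution at $\pm\infty$, exactly when $F_{\pm}$ has a finite strictly positive real boundary value; equivalently $\mathcal{A}_{\pm}$ is an essential support of the absolutely continuous part of the half-line measure $\mu_{\pm}$. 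Correspondingly, the complementary set, where a solution subordinate at $\pm\infty$ exists, supports $(\mu_{\pm})_{\mathrm{s}}$, and the subset carrying $\ell^2$-at-$\pm\infty$ solutions supports the atoms of $\mu_{\pm}$.

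For part (c) I would recover $\Lambda$ from the whole-line Carath\'eodory function $\mathcal{F}_{\Lambda}(z)=\langle\delta_0,(\mathcal{E}+z)(\mathcal{E}-z)^{-1}\delta_0\rangle+\langle\delta_1,(\mathcal{E}+z)(\mathcal{E}-z)^{-1}\delta_1\rangle$ via Cauchy--Stieltjes inversion, and use the explicit expression of $\mathcal{F}_{\Lambda}$ in terms of $F_{+}$ and $F_{-}$ coming from Weyl--Titchmarsh theory for CMV operators (cf.\ \cite{S04,S05}). The key structural fact is that the boundary value of $\Re\mathcal{F}_{\Lambda}$ is strictly positive if and only if at least one of $\Re F_{+}$, $\Re F_{-}$ is, and it is finite whenever at least one of them is finite and positive. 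By the dictionary this says: the density of $\Lambda_\mathrm{ac}$ is positive Lebesgue-a.e.\ precisely on $\mathcal{A}_{+}\cup\mathcal{A}_{-}=\mathcal{A}$, so $\Lambda_\mathrm{ac}(\partial\D\setminus\mathcal{A})=0$ and $\mathcal{A}$ is an essential support in the stated minimal sense; and on $\mathcal{A}$ the radial limit of $\Re\mathcal{F}_{\Lambda}$ is finite, so the singular part places no mass there, giving $\Lambda_\mathrm{s}(\mathcal{A})=0$.

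Part (b) is the crux. Since $\Lambda_\mathrm{ac}$ has essential support $\mathcal{A}$, the measure $\Lambda_\mathrm{s}$ is carried by $\partial\D\setminus\mathcal{A}$, i.e.\ by points where subordinate solutions exist at \emph{both} ends (both $\Re F_{\pm}\to 0$). What remains is to show that on this set a single whole-line solution is subordinate at both $\pm\infty$, placing $\Lambda_\mathrm{s}$ on $\mathcal{S}$; the splitting into $\Lambda_\mathrm{sc}$ on $\mathcal{S}\setminus\mathcal{P}$ and $\Lambda_\mathrm{pp}$ on $\mathcal{P}$ then follows by combining with part (a). Here the whole-line Carath\'eodory function is singular (its real part blows up radially) only at the resonance set where the Weyl combination of $F_{+}$ and $F_{-}$ in its denominator degenerates, and this degeneration forces the right-subordinate and left-subordinate solutions to have proportional boundary data at the origin, hence to glue into one whole-line solution subordinate at both ends. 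I expect this gluing to be the main obstacle: it goes beyond the a.e.\ existence of boundary values and requires the quantitative Jitomirskaya--Last comparison to control the radial behavior of the combined function against that of $F_{+}$ and $F_{-}$ separately. A secondary technical difficulty, and the reason Section~\ref{s.4} is needed, is casting the left half-line as a genuine CMV matrix with the correctly oriented recursion so that the half-line estimates apply verbatim, while keeping careful track of the boundary condition \eqref{eq21} throughout.
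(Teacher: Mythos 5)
Your overall strategy coincides with the paper's: decouple $\mathcal{E}$ at the origin, use the half-line Jitomirskaya--Last theory (Lemma~\ref{le3}) to build the dictionary of Lemma~\ref{le4} between subordinacy at $\pm\infty$ and purely imaginary radial limits of $F_{+}$ and $M_{-}$, express the whole-line Carath\'eodory function $F = 1 + 2z(G_{00}+G_{11})$ in terms of $F_{+}$ and $M_{-}$ via the Gesztesy--Zinchenko resolvent formula, read off (c) from Lebesgue-a.e.\ boundary values, and obtain (b) from the degeneration $\lim_{r\uparrow 1} F_{+} = \lim_{r\uparrow 1} M_{-}$, which glues the two one-sided subordinate solutions into a single whole-line solution; this gluing is exactly the paper's Case~(1) in part (b), via Lemma~\ref{le4}(5), so that part of your plan is sound.

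The genuine gap is in part (b), and it is not the gluing you single out but the prior issue of \emph{existence} of radial limits. Your argument for (b) is conditioned on $F_{+}(re^{i\theta},0)$ and $M_{-}(re^{i\theta},0)$ having limits as $r \uparrow 1$ at $\Lambda_\mathrm{s}$-a.e.\ $\theta$. Carath\'eodory (and anti-Carath\'eodory) functions have radial limits only Lebesgue-a.e., and $\Lambda_\mathrm{s}$ is singular with respect to Lebesgue measure, so precisely on the set carrying $\Lambda_\mathrm{s}$ (the set of $\theta$ with $\lim_{r\uparrow 1}\Re F(re^{i\theta}) = \infty$) you have no a priori control: there may be points where neither limit exists, and such points need not lie in $\mathcal{S}$. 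Your suggestion that the ``quantitative Jitomirskaya--Last comparison'' controls this cannot work: the inequalities \eqref{eq23} and \eqref{JLineq.lefthalfline.omega} compare $|F^{\omega}_{\pm}(rz)|$ with ratios of solution norms at each fixed $r$; they say nothing about convergence as $r \uparrow 1$. The paper needs a separate tool here (Case~(3) of its proof of (b)): Poltoratskii's theorem, which guarantees that the ratio $M_{00}(re^{i\theta})/F(re^{i\theta})$ (with $M_{00}$ the Borel transform of the $\delta_0$-spectral measure $\Lambda_0$) converges $\Lambda_\mathrm{s}$-a.e.\ to the Radon--Nikodym derivative $d\Lambda_0/d\Lambda$. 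If neither $F_{+}$ nor $M_{-}$ had a radial limit at some such $\theta$, one can choose (using that all accumulation points are purely imaginary, plus an intermediate-value argument yielding uncountably many accumulation points) two sequences $r_n, t_n \uparrow 1$ along which both functions converge to distinct common values $\ell_1 \neq \ell_2$, forcing $M_{00}/F$ to converge to two different values; hence the set of such $\theta$ is $\Lambda_\mathrm{s}$-null. Without this step, or a substitute for it, your proof of $\Lambda_\mathrm{s}(\partial \D \setminus \mathcal{S}) = 0$ is incomplete. (A minor companion point handled by the same limit analysis, the paper's Case~(2): one must also rule out that exactly one of the two limits exists.)
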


\begin{remark}\label{r.ppstatement}
Part (a) is well known and stated here for completeness. The statement follows quickly from the spectral theorem; see, for example, the proof of \cite[Theorem~7.27.(a)]{W80} for the derivation in the self-adjoint case -- the argument is analogous in the unitary case. Indeed, as discussed in the introduction, the philosophy behind subordinacy theory is to identify a type of solution behavior that discriminates between the absolutely and singular parts of spectral measures, just like square-summability discriminates  between the continuous and pure point parts of spectral measures.
\end{remark}

Typical applications of this result rely on sufficient conditions for the absence or presence of subordinate solutions. For example, the absence of subordinate solutions follows from the boundedness of the transfer matrices, which are defined as follows:
$$
A(n,z) = \begin{cases} S(\alpha_n,z) \times \cdots \times S(\alpha_0,z) & n \ge 0 \\
S(-\overline{\alpha}_{n-2},z) \times S(-\overline{\alpha}_{n-1},z) \times \cdots \times S(-\overline{\alpha}_{-2},z)   & n \le -1 \end{cases},
$$
where $S(\cdot,z)$ is given by \eqref{e.salphazdef}. We will give more details in Section 4.  Specifically, we have the following statement:

\begin{coro}\label{c.bddsol}
Let
$$
\mathcal{B}_{\pm} = \set{ z \in \partial \D : \sup_{n \in \Z_\pm} \|A(n,z)\| < \infty }.
$$
Then, $\mathcal{B}_{\pm} \subseteq \mathcal{A}_{\pm}$ with $\mathcal{A}_{\pm}$ as defined in \eqref{e.apmdef}. In particular, the restriction of $\Lambda$ to each of $\mathcal{B}_{\pm}$ is purely absolutely continuous.
\end{coro}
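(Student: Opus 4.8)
The plan is to prove the set inclusion $\mathcal{B}_\pm \subseteq \mathcal{A}_\pm$ first, because the absolute continuity statement then follows immediately from the main theorem: once $\mathcal{B}_\pm \subseteq \mathcal{A}_\pm \subseteq \mathcal{A} = \mathcal{A}_+ \cup \mathcal{A}_-$, Theorem~\ref{t.main1rev}(c) gives $\Lambda_\mathrm{s}(\mathcal{A}) = 0$ and hence $\Lambda_\mathrm{s}(\mathcal{B}_\pm) = 0$. This says exactly that the restriction of $\Lambda$ to $\mathcal{B}_\pm$ carries no singular part and is therefore purely absolutely continuous.

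To prove the inclusion, I would fix $z \in \mathcal{B}_\pm$ and show that \eqref{eq1} has no solution subordinate at $\pm\infty$. The first step is to observe from \eqref{e.salphazdef} that $\det S(\alpha,z) = z$, so each one-step transfer matrix has unimodular determinant for $z \in \partial\D$, and therefore $|\det A(n,z)| = 1$ for every $n$. Since any $2\times 2$ matrix $M$ with $|\det M| = 1$ has singular values $\sigma$ and $\sigma^{-1}$ and thus satisfies $\|M^{-1}\| = \|M\|$, the hypothesis $\sup_{n \in \Z_\pm}\|A(n,z)\| < \infty$ upgrades automatically to $\sup_{n \in \Z_\pm}\|A(n,z)^{-1}\| < \infty$ as well.

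Next I would use that every solution $u$ of \eqref{eq1} arises by propagating a fixed initial vector with the matrices $A(n,z)$. The two-sided bound on $\|A(n,z)\|$ and $\|A(n,z)^{-1}\|$ then forces the associated solution vector at site $n$ to stay between two positive constants, uniformly in $n$; translating this into the weighted norm of Definition~\ref{d.subordinatesolution} yields $c\sqrt{|x|} \le \|u\|_x \le C\sqrt{|x|}$ for $|x|$ large, with $0 < c \le C$ depending only on $u$. Applying this comparison to any two linearly independent solutions $u$ and $p$ shows that $\|u\|_x/\|p\|_x$ stays bounded away from $0$ as $x \to \pm\infty$, so neither solution can be subordinate. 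Hence no subordinate solution exists at $\pm\infty$ and $z \in \mathcal{A}_\pm$, which is the desired inclusion.

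I expect the main obstacle to be the passage from the uniform two-sided bound on the propagated solution vector to the comparison $\|u\|_x \asymp \sqrt{|x|}$ on the windowed norm. This requires tracking the even/odd block structure of the CMV transfer matrices together with the precise definition of $\|\cdot\|_x$, so that boundedness above and below of the vector iterates genuinely controls the windowed sum. For $\mathcal{B}_-$ one must additionally invoke the left half-line conventions developed in Section~\ref{s.4}, but once these are in place the argument is entirely symmetric to the right half-line case.
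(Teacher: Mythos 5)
Your overall architecture is the same as the paper's: establish the inclusion $\mathcal{B}_{\pm} \subseteq \mathcal{A}_{\pm}$ and then obtain the absolute continuity statement from Theorem~\ref{t.main1rev}(c); the inclusion itself is attacked by upgrading boundedness of the transfer matrices to two-sided bounds on solutions, which is also how the paper (following the discussion around \cite[Corollary~10.8.4]{S05}) proceeds. Your determinant observation is correct: $\det S(\alpha,z) = z$, so $|\det A(n,z)| = 1$ on $\partial\D$ and $\|A(n,z)^{-1}\| = \|A(n,z)\|$; this is a legitimate substitute for the paper's use of the Wronskian-type identity \cite[(3.2.23)]{S05}, which is how the paper gets its lower bound $|p_{+}(z,n)| \geq c^{-1}$ on the second-kind solution.

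However, the step you flag as ``the main obstacle'' is a genuine gap, and it is precisely where the mathematical content of the corollary lies. The windowed norm $\|u\|_x$ of Definition~\ref{d.subordinatesolution} sums only $|u(j)|^2$, the scalar entries of the solution of \eqref{eq1}, whereas your two-sided bound controls the norm of the propagated \emph{two-vector} (in the Gesztesy--Zinchenko formalism of \eqref{eq10}, the pair $(u(j),v(j))$ with $v = \mathcal{M}u$). A lower bound on $|u(j)|^2 + |v(j)|^2$ does not by itself bound $|u(j)|^2$ from below: a priori the vector's mass at a given site could sit entirely in the $v$-component, and then the lower bound $\|u\|_x \geq c\sqrt{|x|}$ --- which is exactly what rules out subordinacy --- would fail. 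There are two ways to close this. One is a consecutive-site argument: from \eqref{eq10} one reads off $|v(n)| \leq |u(n)| + |u(n+1)|$ in both parities (since $\rho_n, |\alpha_n| \leq 1$ and $|z|=1$), hence $|u(n)|^2 + |u(n+1)|^2 \geq \tfrac{1}{3}\bigl(|u(n)|^2 + |v(n)|^2\bigr)$, and summing over pairs of consecutive sites yields $\|u\|_x^2 \gtrsim |x|$. The other is the paper's route: for $z \in \partial\D$ one has $|\varphi_n(z)| = |\varphi_n^*(z)|$, so for the canonical solutions the components are comparable to the vector norms; the identity \cite[(3.2.23)]{S05} then gives the lower bound on $\psi$, and the resulting two-sided bound on $\|u_{+}\|_{x}/\|p_{+}\|_{x}$ is fed into the Jitomirskaya--Last inequality (Lemma~\ref{le3}) together with Lemma~\ref{le4}. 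Without one of these arguments your key estimate $c\sqrt{|x|} \leq \|u\|_x \leq C\sqrt{|x|}$ is asserted rather than proved. (Your treatment of the left half-line via the Section~\ref{s.4} conjugation, and your deduction of absolute continuity from Theorem~\ref{t.main1rev}(c), both match the paper and are fine.)
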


In many cases of interest, the Verblunsky coefficients are dynamically defined. As a result, the associated Szeg\H{o} recursion can be expressed in terms of $\mathrm{SU}(1,1)$-valued cocycles over the base dynamical system in question. The boundedness property that feeds into Corollary~\ref{c.bddsol} is then often established via a suitable reducibility result. Let us state another corollary in the dynamically defined setting that implements this connection.

\begin{coro}\label{c.ddvcbddc}
Suppose that $T : \Omega \to \Omega$ is invertible and $f : \Omega \to \D$. This gives rise to $\omega$-dependent Verblunsky coefficients
$$
\alpha_n(\omega) = f(T^n \omega), \quad \omega \in \Omega, \; n \in \Z,
$$
and $\omega$-dependent extended CMV matrices $\mathcal{E}(\omega) = \mathcal{E}(\{ \alpha_n(\omega)\})$. Moreover, for each $z \in \partial \D$, consider the map $A_z : \Omega \to \mathrm{SU}(1,1)$ given by
$$
A_z(\omega) = z^{-1/2} S(f(\omega),z),
$$
where $S(\cdot,z)$ is given by \eqref{e.salphazdef}.

Denote by $\mathcal{R}$ the set of $z \in \partial \D$ for which there exist $B_z : \Omega \to \mathrm{SU}(1,1)$ bounded and $A_z^{(0)} \in \mathrm{SU}(1,1)$ elliptic such that for every $\omega \in \Omega$, we have $A_z(\omega) = B_z(T \omega) A_z^{(0)} B_z(\omega)^{-1}$.

Then, for every $\omega \in \Omega$, the canonical spectral measure associated with $\mathcal{E}(\omega)$, $\Lambda(\omega)$, is purely absolutely continuous on $\mathcal{R}$.
\end{coro}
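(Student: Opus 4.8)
The plan is to reduce the statement to Corollary~\ref{c.bddsol}. Fix $\omega \in \Omega$ and consider the extended CMV matrix $\CE(\omega)$ together with its associated right-half-line set $\CB_+ = \set{z \in \partial\D : \sup_{n \ge 0}\|A(n,z)\| < \infty}$. Since Corollary~\ref{c.bddsol} guarantees that the restriction of $\Lambda(\omega)$ to $\CB_+$ is purely absolutely continuous, it suffices to prove the inclusion $\CR \subseteq \CB_+$; as this holds for every $\omega$, the corollary follows at once. Everything thus reduces to showing that, for each $z \in \CR$ and each $\omega$, the transfer matrices $A(n,z)$ of $\CE(\omega)$ are uniformly bounded in $n \ge 0$.

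First I would express these transfer matrices through the cocycle $A_z$. A direct computation gives $\det S(\alpha,z) = z$, so for $|z| = 1$ we may write $S(f(\omega),z) = z^{1/2} A_z(\omega)$ with $A_z(\omega) \in \mathrm{SU}(1,1)$. Using $\alpha_n(\omega) = f(T^n\omega)$, this yields, for $n \ge 0$,
\[
A(n,z) = S(\alpha_n(\omega),z) \cdots S(\alpha_0(\omega),z) = z^{(n+1)/2}\, A_z(T^n\omega) \cdots A_z(\omega).
\]
Because the scalar prefactor has modulus one, $\|A(n,z)\|$ equals the norm of the cocycle iterate $A_z(T^n\omega) \cdots A_z(\omega)$. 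For $z \in \CR$ I would then substitute the reducibility relation $A_z(\omega) = B_z(T\omega) A_z^{(0)} B_z(\omega)^{-1}$ into each factor; the inner conjugating factors cancel in pairs and the product telescopes to
\[
A_z(T^n\omega) \cdots A_z(\omega) = B_z(T^{n+1}\omega)\, \bigl( A_z^{(0)} \bigr)^{n+1}\, B_z(\omega)^{-1}.
\]
Since $B_z$ is bounded and takes values in $\mathrm{SU}(1,1)$ (so that $\|B_z(\omega)^{-1}\| = \|B_z(\omega)\|$ by the reciprocity of the singular values of a determinant-one matrix), this bounds $\|A(n,z)\|$ by $\bigl(\sup_{\omega'}\|B_z(\omega')\|\bigr)^2 \cdot \sup_{m \ge 0}\|(A_z^{(0)})^m\|$.

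The only substantive point, and the step I expect to require the most care, is the uniform bound $\sup_{m \ge 0}\|(A_z^{(0)})^m\| < \infty$, which is exactly where ellipticity is used: an elliptic element of $\mathrm{SU}(1,1)$ has eigenvalues $e^{\pm i\theta}$ on the unit circle and is diagonalizable, hence conjugate in $\GL(2,\C)$ to $\mathrm{diag}(e^{i\theta}, e^{-i\theta})$, so its powers are bounded by a constant depending only on $A_z^{(0)}$. Combining the three factors produces a bound on $\|A(n,z)\|$ that is uniform in $n \ge 0$, giving $z \in \CB_+$ and completing the reduction. Beyond this, the main obstacle is purely bookkeeping: correctly tracking the half-integer scalar $z^{(n+1)/2}$ and the $\mathrm{SU}(1,1)$ normalization through the telescoping product, and noting that boundedness on the single half-line already suffices --- indeed $\CB_+ \subseteq \CA_+ \subseteq \CA$ and $\Lambda_\mathrm{s}(\CA) = 0$ by Theorem~\ref{t.main1rev}(c) --- so the left-half-line transfer matrices never need to be analyzed separately.
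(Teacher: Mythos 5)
Your proposal is correct and follows essentially the same route as the paper's proof: both telescope the reducibility relation $A_z(\omega) = B_z(T\omega)A_z^{(0)}B_z(\omega)^{-1}$ into $B_z(T^{n+1}\omega)\bigl(A_z^{(0)}\bigr)^{n+1}B_z(\omega)^{-1}$, use the unimodular scalar $z^{(n+1)/2}$ to identify the cocycle norm with $\|A(n,z;\omega)\|$, and conclude $\mathcal{R} \subseteq \mathcal{B}_+(\omega)$ so that Corollary~\ref{c.bddsol} applies. Your added details (the singular-value argument for $\|B_z(\omega)^{-1}\| = \|B_z(\omega)\|$ and the diagonalization bound for powers of the elliptic $A_z^{(0)}$) merely flesh out steps the paper leaves implicit.
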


Recall that an $\mathrm{SU}(1,1)$ matrix is called elliptic if its trace belongs to the real interval $(-2,2)$ (in this context it is useful to remind the reader that $\mathrm{SU}(1,1)$ and $\mathrm{SL}(2,\R)$ are canonically conjugate; see \cite[equation (10.4.27)]{S05}). The assumptions of Corollary~\ref{c.ddvcbddc} can be verified in a variety of situations, in analogy to the extensive literature on reducibility for quasi-periodic $\mathrm{SL}(2,\R)$ cocycles of sufficient regularity; see, for example, \cite{ARAC, AK06, H09} and references therein. This connection is presently being worked out \cite{L20+, W20+}.

\bigskip

We conclude this section with two applications of the description of the singular part of an extended CMV matrix in terms of subordinate solutions. Both of these applications are known via different methods, but the approach via subordinacy theory provides an interesting additional angle. Since these results are not new, we will not state them as formal corollaries.

The first application is the following statement: any extended CMV matrix $\mathcal{E}$ has simple singular spectrum. In the setting of extended CMV matrices, this result was first proved by Simon \cite{S05b}. However, a statement of this kind had been obtained earlier for second-order differential operators, originally proved by Kac \cite{K62, K63} and then proved via subordinacy theory by Gilbert \cite{G98}. The present paper provides the basis for Gilbert's approach to this result in the setting of extended CMV matrices.

Another application concerns a version of the Ishii--Pastur theorem for ergodic extended CMV matrices, proved via subordinacy theory, an approach proposed in the setting of Schr\"odinger operators by Buschmann \cite{B97}. Again we will not state this as a formal corollary since the result is already known, see \cite[Theorem~B.2]{FDG}, and merely point out that the Ishii-Pastur theorem is the inclusion $\subseteq$ in the identity stated in \cite[Theorem~B.2]{FDG}, and that this inclusion can be proved along similar lines as in \cite{B97} using Theorem~\ref{t.main1rev} above.

\section{Preliminaries}\label{s.3}

\subsection{Carath\'{e}odory Functions}

A \emph{Carath\'{e}odory function} is a holomorphic map from $\D$ to the right half plane $\set{z : \Re z > 0}$. We also say a function is an \emph{anti-Carath\'{e}odory function} when its negative is a Carath\'{e}odory function.
If we modify $\alpha(n_0) = e^{i\theta_0}$, then ~(\ref{eCMV}) becomes the direct sum of matrices acting on $\ell^2([n_0+1,\infty)\bigcap \Z)$ and  $\ell^2([n_0,-\infty)\bigcap \Z)$ of the form ~(\ref{CMV}). We label the halves as $\CC_{+}^{(n_0+1)}$ and $\CC_{-}^{(n_0)}$, respectively. We consider the case when $n_0 = -1$. Concretely, ~(\ref{eCMV}) becomes the direct sum of matrices acting on  $\ell^2 (\Z_+)$ and $\ell^2 (\Z_-)$ of the form ~(\ref{CMV}), where we write $\mathbb N := [0,\infty) \bigcap \Z$ and $\Z_- := [-1,-\infty) \bigcap \Z$.
Notice that $\mathcal{C}^{(0)}_{+,11} = - e^{i\theta_0} \overline{\alpha}_0$ and $\mathcal{C}^{(0)}_{+,21} = - e^{i\theta_0} \rho_0$.
One can find the correspondence between a given CMV matrix and its Carath\'{e}odory function in~\cite[Section 1.3]{S04}. Specifically, a Carath\'{e}odory function is the CMV analog of the $m$-function in the theory of Jacobi matrices, and is connected to the spectral theory of the CMV matrices.

Denote the Carath\'{e}odory function corresponding to $\CC^{(0)}_{+}$ by $F_{+}(z,0) = \int_{\partial \D}\frac{\zeta + z}{\zeta - z} \, d\Lambda_{+}(\zeta,0)$ and $\CC^{(-1)}_{-}$ by $F_{-}(z,-1) = -\int_{\partial \D}\frac{\zeta + z}{\zeta - z} \, d\Lambda_{-}(\zeta,-1)$, where $\Lambda_{+}(\zeta,0)$ and $\Lambda_{-}(\zeta,-1)$ are the spectral measures of $\CC^{(0)}_{+}$ and $\CC^{(-1)}_{-}$, respectively.

The Carath\'{e}odory function for $\CE$ is given by the formula
\begin{equation*}
F(z) = \int \frac{e^{i\theta} + z}{e^{i\theta} - z} \, d \Lambda(\theta),
\end{equation*}
where as above $\Lambda$ is the sum of the spectral measures of $\CE$ relative to the vectors $\delta_0, \delta_1$.

\subsection{The Gesztesy-Zinchenko Description}

The Gesztesy-Zinchenko (GZ) matrix from \cite{GZ06} is a key tool to encode the behavior of solutions to~\eqref{eq1}. As we follow the conventions from \cite{S04,S05}, let us point out that there are some differences between the notations in \cite{GZ06} and ours, which are as follows: $\alpha_n = -\overline{\alpha}_{n-1}$, $U(\set{\alpha_{n}}) = \mathcal{E}(\set{-\overline{\alpha}_{n-1}})$, $U_{+,0}(\set{\alpha_{n}}) = \CC^{(0)\top}_{+}(\set{-\overline{\alpha}_{n-1}})$, $u_{\pm}(z,n)=v_{\pm}(z,n)$ and $v_{\pm}(z,n)=u_{\pm}(z,n)$, where the left hand sides are their notations and the right hand sides are our notations.

First, recall that any extended CMV matrix $\mathcal{E}$ can be factorized into direct sums of $2\times 2$ matrices of the form
\begin{equation*}
\Theta=
\left(
\begin{matrix}
\overline{\alpha} & \rho\\
\rho & -\alpha
\end{matrix}
\right).
\end{equation*}
Let
\begin{equation*}
\mathcal{L:}=\bigoplus_{j\in\mathbb{Z}}\Theta(\alpha_{2j}) \quad {\rm and}\quad
\mathcal{M}:=\bigoplus_{j\in\mathbb{Z}}\Theta(\alpha_{2j+1}),
\end{equation*}
then $\mathcal{E}=\mathcal{L}\mathcal{M}$.

Set
\begin{equation*}
P(\alpha,z):=\frac{1}{\rho}
\left(
\begin{matrix}
-\alpha & z^{-1}\\
z & -\overline{\alpha}
\end{matrix}
\right)   \textrm{ and }
Q(\alpha,z):=\frac{1}{\rho}
\left(
\begin{matrix}
-\overline{\alpha} & 1\\
1 & -\alpha
\end{matrix}
\right),
\textrm{ for } z\in \C\backslash \set{0}.
\end{equation*}
Now, if $u$ is a complex sequence such that $\mathcal{E}u=zu$ and $v=\mathcal{M}u$,
one can easily see that $\mathcal{E}^{\top}v=zv$ holds.
By \cite[Proposition 2.1]{DEFHV}, the following equation holds for $n \in \N$, which can be extended to $n\in\Z$,
\begin{equation}{\label{eq10}}
\left(
\begin{matrix}
u(n+1)\\
v(n+1)
\end{matrix}
\right)
=
T(n,z)
\left(
\begin{matrix}
u(n)\\
v(n)
\end{matrix}
\right),
\end{equation}
where
\begin{equation*}
T(n,z)=
\begin{cases}
P(\alpha_n,z), \qquad & n\text{ is even,}\\
Q(\alpha_n,z),& n \text { is odd.}
\end{cases}
\end{equation*}

\begin{definition}{\label{def1}}
We denote by
$\left(
\begin{matrix}
u_{+}(z,n,n_{0})\\
v_{+}(z,n,n_{0})
\end{matrix}
\right)_{n \geq n_0}
$
and
$\left(
\begin{matrix}
p_{+}(z,n,n_{0})\\
q_{+}(z,n,n_{0})
\end{matrix}
\right)_{n \geq n_0}
$,
for $z \in \C \backslash \set{0}$, two linearly independent solutions of ~(\ref{eq10}) for $n \geq 0$ with the following initial conditions:
\begin{align}\label{eq4}
\left(
\begin{matrix}
u_{+}(z,n_0,n_0)\\
v_{+}(z,n_0,n_0)
\end{matrix}
\right)
& =
\begin{cases}
\left(
\begin{matrix}
1\\
1
\end{matrix}
\right) \quad n_{0} \textrm{ is even},\\
\left(
\begin{matrix}
1\\
z
\end{matrix}
\right) \quad n_{0} \textrm{ is odd},
\end{cases}
\\
\nonumber \left(
\begin{matrix}
p_{+}(z,n_0,n_0)\\
q_{+}(z,n_0,n_0)
\end{matrix}
\right)
& =
\begin{cases}
\left(
\begin{matrix}
1\\
-1
\end{matrix}
\right) \quad n_{0} \textrm{ is even},\\
\left(
\begin{matrix}
-1\\
z
\end{matrix}
\right) \quad n_{0} \textrm{ is odd}.
\end{cases}
\end{align}
Similarly, we denote by
$\left(
\begin{matrix}
u_{-}(z,n,n_0)\\
v_{-}(z,n,n_0)
\end{matrix}
\right)_{n \leq n_0}
$
and
$\left(
\begin{matrix}
p_{-}(z,n,n_0)\\
q_{-}(z,n,n_0)
\end{matrix}
\right)_{n \leq n_0}
$,
for $z \in \C \backslash \set{0}$, two linearly independent solutions of \eqref{eq10} for $n \leq -1$ with the following initial conditions:
\begin{align}\label{eq9}
\left(
\begin{matrix}
u_{-}(z,n_0,n_0)\\
v_{-}(z,n_0,n_0)
\end{matrix}
\right)
& =
\begin{cases}
\left(
\begin{matrix}
1\\
-z
\end{matrix}
\right) \quad n_{0} \textrm{ is even},\\
\left(
\begin{matrix}
-1\\
1
\end{matrix}
\right) \quad n_{0} \textrm{ is odd},
\end{cases}
\\
\nonumber \left(
\begin{matrix}
p_{-}(z,n_0,n_0)\\
q_{-}(z,n_0,n_0)
\end{matrix}
\right)
& =
\begin{cases}
\left(
\begin{matrix}
1\\
z
\end{matrix}
\right) \quad n_{0} \textrm{ is even},\\
\left(
\begin{matrix}
1\\
1
\end{matrix}
\right) \quad n_{0} \textrm{ is odd}.
\end{cases}
\end{align}
\end{definition}

\begin{remark}{\label{re1}}
The above definition is from \cite[Definition 2.4]{GZ06}. Here our $u_{\pm}$, $v_{\pm}$, $p_{\pm}$ and $q_{\pm}$ are their $r_{\pm}$, $p_{\pm}$, $s_{\pm}$ and $q_{\pm}$ respectively. By (\ref{eq25}) and (\ref{eq10}), we have
\begin{equation}{\label{eq12}}
u_{+}(z , n) =
\begin{cases}
z^{\frac{-(n+1)}{2}} \varphi^{*}(z , n)\quad & n \textrm{ is }  odd,\\
z^{\frac{-n}{2}} \varphi(z , n) &n \textrm{ is }  even,
\end{cases}
\end{equation}
\begin{equation}{\label{eq13}}
v_{+}(z , n) =
\begin{cases}
z^{\frac{-(n-1)}{2}} \varphi(z , n) \quad &n \textrm{ is }  odd,\\
z^{\frac{-n}{2}} \varphi^{*}(z , n) & n \textrm{ is }  even,
\end{cases}
\end{equation}
\begin{equation}{\label{eq27}}
p_{+}(z , n) =
\begin{cases}
-z^{\frac{-(n+1)}{2}} \psi^{*}(z , n)\quad & n \textrm{ is }  odd,\\
z^{\frac{-n}{2}} \psi(z , n) &n \textrm{ is }  even,
\end{cases}
\end{equation}
\begin{equation}{\label{eq28}}
q_{+}(z , n) =
\begin{cases}
z^{\frac{-(n-1)}{2}} \psi(z , n) \quad &n \textrm{ is }  odd,\\
-z^{\frac{-n}{2}} \psi^{*}(z , n) & n \textrm{ is }  even,
\end{cases}
\end{equation}
for $z \in \partial \D$.
\end{remark}

For simplicity to check (\ref{eq12})--(\ref{eq28}), we rewrite the equation for $\set{\varphi(z,n)}_{n\in \N}$, $\set{\psi(z,n)}_{n\in \N}$, $\set{u_{+}(z,n)}_{n\in \N}$ and $\set{v_{+}(z,n)}_{n\in \N}$. Once we have (\ref{eq12}) and (\ref{eq13}), (\ref{eq27}) and (\ref{eq28}) hold immediately. Indeed, for $\set{\varphi(z,n)}_{n\in \N}$ and $\set{\psi(z,n)}_{n\in \N}$, we have
\begin{align*}
\rho_n \varphi(z, n+1) & = z \varphi(z, n) - \overline{\alpha}_{n} \varphi^{*}(z,n), \\
\rho_{n} \varphi^{*}(z, n+1) & = -\alpha_{n} z \varphi(z,n) + \varphi^{*}(z,n).
\end{align*}
For $\set{u_{+}(z,n)}_{n\in \N}$ and $\set{v_{+}(z,n)}_{n\in \N}$, when $n$ is even,
\begin{align*}
\rho_n u_{+}(z, n+1) & = -\alpha_n u_{+}(z, n) + z^{-1} v_{+}(z,n), \\
\rho_{n} v_{+}(z, n+1) & = z u_{+}(z,n) - \overline{\alpha}_{n}v(z,n);
\end{align*}
when $n$ is odd,
\begin{align*}
\rho_n u_{+}(z, n+1) & = -\overline{\alpha}_n u_{+}(z, n) + v_{+}(z,n), \\
\rho_{n} v_{+}(z, n+1) & = u_{+}(z, n) - \alpha_n v_{+}(z,n).
\end{align*}
It follows that \eqref{eq12} and \eqref{eq13} hold.

\begin{lemma}(\cite[Corollary 2.16]{GZ06}{\label{le2}})
There are solutions
$\left(
\begin{matrix}
s_{\pm}(z, \cdot)\\
t_{\pm}(z, \cdot)
\end{matrix}
\right)_{n\in \Z}$
of \eqref{eq10}, unique up to constant multiples, so that for $z\in \C \backslash (\partial \D \bigcup \set{0})$,
\begin{equation*}
\left(
\begin{matrix}
s_{+}(z, \cdot)\\
t_{+}(z, \cdot)
\end{matrix}
\right) \in \ell^2(\mathbb N)^2,
\end{equation*}
\begin{equation*}
\left(
\begin{matrix}
s_{-}(z, \cdot)\\
t_{-}(z, \cdot)
\end{matrix}
\right) \in \ell^2(\Z_-)^2.
\end{equation*}
\end{lemma}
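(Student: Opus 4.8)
The plan is to treat the two half-lines separately and symmetrically, so I will focus on the construction at $+\infty$; the argument at $-\infty$ is entirely analogous after reflecting the lattice. As explained at the start of this subsection, modifying the Verblunsky coefficient at $n_0=-1$ decouples $\CE$ into the orthogonal sum $\CC^{(0)}_{+}\oplus\CC^{(-1)}_{-}$ of standard CMV matrices acting on $\ell^2(\N)$ and $\ell^2(\Z_-)$. Each summand is unitary, being the matrix representation of multiplication by $z$ in a CMV basis, so its spectrum is contained in $\partial\D$. The overall strategy is to produce, for $z\in\C\setminus(\partial\D\cup\set{0})$, a nonzero solution of \eqref{eq10} that is square-summable at $+\infty$ (existence), and then to show that any two such solutions are proportional (uniqueness).

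Uniqueness is the soft part, and I would dispatch it first via a conserved Wronskian. A direct computation gives $\det P(\alpha,z)=\det Q(\alpha,z)=\rho^{-2}(|\alpha|^2-1)=-1$, so every transfer matrix in \eqref{eq10} has determinant $-1$. Consequently, for two solutions $(s_1,t_1)$ and $(s_2,t_2)$ of \eqref{eq10}, the Wronskian $W(n)=s_1(n)t_2(n)-s_2(n)t_1(n)$ obeys $W(n+1)=-W(n)$, so $|W(n)|$ is independent of $n$. If both solutions lie in $\ell^2(\N)^2$, then $s_j(n),t_j(n)\to 0$ as $n\to+\infty$, whence $W(n)\to 0$ and therefore $W\equiv 0$. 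Since the solution space of \eqref{eq10} is two-dimensional, a vanishing Wronskian forces the two solutions to be linearly dependent; this yields the claimed uniqueness up to a constant multiple.

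For existence I would use the resolvent of the half-line operator. Since $z\notin\partial\D\cup\set{0}$ lies in the resolvent set of the unitary operator $\CC^{(0)}_{+}$, the operator $(\CC^{(0)}_{+}-z)^{-1}$ is bounded on $\ell^2(\N)$. Applying it to $\delta_N$ for $N$ large produces $g=(\CC^{(0)}_{+}-z)^{-1}\delta_N\in\ell^2(\N)$ with $(\CC^{(0)}_{+}-z)g=\delta_N$; because $\CC^{(0)}_{+}$ is five-diagonal, $g$ solves the eigenvalue equation $\CC^{(0)}_{+}g=zg$ at every site outside a fixed neighborhood of $N$, and it is square-summable. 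Translating this back into the Gesztesy--Zinchenko variables via $v=\mathcal{M}u$ and extending by the invertible transfer matrices of \eqref{eq10} to all of $\Z$ gives the desired solution $(s_{+},t_{+})$. Equivalently, and more in the spirit of \cite{GZ06,S05}, one can invoke the nested Weyl-disk construction: the half-line operator is in the limit-point case (which is precisely the uniqueness established above), the Weyl disks shrink to a point, and the resulting Weyl solution $(u_{+},v_{+})+m_{+}(z)(p_{+},q_{+})$ is square-summable, with $m_{+}(z)$ the Weyl coefficient attached to the Carath\'{e}odory function $F_{+}(z,0)$. I expect the existence step to be the main obstacle: the uniqueness and Wronskian bookkeeping are routine, whereas genuinely producing a square-summable solution requires the operator-theoretic input that $z$ avoids the spectrum, together with care at the boundary caused by the five-diagonal structure, namely ensuring that the resolvent column is not eventually zero, or equivalently that the Weyl disks do not degenerate.
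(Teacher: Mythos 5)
You are proving a statement that the paper itself does not prove: Lemma~\ref{le2} is imported verbatim from \cite[Corollary~2.16]{GZ06}, so your attempt has to stand on its own rather than be compared with an internal argument. Your uniqueness half does stand, and is complete: since $\det P(\alpha,z)=\det Q(\alpha,z)=-1$, the Wronskian $W(n)=s_1(n)t_2(n)-s_2(n)t_1(n)$ of two solutions of \eqref{eq10} has constant modulus; because $\ell^2$ \emph{sequences} (unlike $L^2$ functions) tend to zero pointwise, two solutions square-summable at $+\infty$ force $W(n)\to 0$, hence $W\equiv 0$, hence proportionality via the invertible transfer matrices. This is exactly the discrete mechanism that makes CMV operators always ``limit point,'' and it matches the uniqueness part of \cite{GZ06}.

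The existence half, however, has a genuine gap, located precisely at the step you flag but do not close. The assertion that applying $(\CC^{(0)}_{+}-z)^{-1}$ to $\delta_N$ ``for $N$ large'' produces a usable $g$ fails as stated: resolvent columns of CMV matrices can be eventually zero --- indeed compactly supported --- and this can happen for \emph{all} $N$ of one parity simultaneously, so taking $N$ large does not help. Concretely, let $\alpha_0\neq 0$, $\alpha_n=0$ for $n\geq 1$, and $z=\overline{\alpha}_0\in\D\setminus\set{0}$; reading off the first column of \eqref{CMV} gives $(\CC^{(0)}_{+}-z)\,\rho_0^{-1}\delta_0=\delta_1$, so $(\CC^{(0)}_{+}-z)^{-1}\delta_1=\rho_0^{-1}\delta_0$, and since $\varphi(z,N)$ vanishes at $z=\overline{\alpha}_0$ for every $N\geq 1$, the same degeneration occurs for every odd column. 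The repair requires an actual input: if the column with \emph{even} index $N$ were eventually zero, then the pair $(g,\mathcal{M}g)$ solves \eqref{eq10} at every step except the one crossing row $N$, vanishes identically on $[N+1,\infty)$, and by the boundary condition at $0$ is a nonzero multiple of $(u_+,v_+)$ on $[0,N]$; the two equations of the broken block then force $v_+(z,N)=z^{-N/2}\varphi^*(z,N)=0$, which is impossible for $|z|<1$ because all zeros of $\varphi^*(\cdot,N)$ lie outside $\overline{\D}$. Symmetrically, odd columns can only degenerate when $|z|<1$, so they are safe for $|z|>1$. Thus one must choose the parity of $N$ according to whether $|z|<1$ or $|z|>1$, invoking the classical zero-location facts for $\varphi(\cdot,N)$ and $\varphi^*(\cdot,N)$; with that choice your construction (extend $(g,\mathcal{M}g)$ from $[N+1,\infty)$ to all of $\Z$ by the invertible transfer matrices) goes through, and the left half-line follows by the reflection $\tilde{\alpha}_n=-\overline{\alpha}_{-(n+2)}$ as in Section~4. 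Without this (or the genuine Weyl-solution construction of \cite{GZ06}), the existence step is incomplete; your Weyl-disk alternative names the right object but defers exactly the same nondegeneracy question.
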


\begin{lemma}(\cite[Theorem 5.3]{S041}){\label{le01}}
Let $z \in \D$. Then
\begin{equation*}
\sum_{n=0}^{\infty}\left|
\left(
\begin{matrix}
\psi(z, n)\\
-\psi^{*}(z,n)
\end{matrix}
\right)
+ \beta
\left(
\begin{matrix}
\varphi(z, n)\\
\varphi^{*}(z,n)
\end{matrix}
\right)
\right|^2
< \infty
\end{equation*}
if and only if
\begin{equation*}
\beta = F_{+}(z,0).
\end{equation*}
\end{lemma}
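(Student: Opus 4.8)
The plan is to read Lemma~\ref{le01} as the CMV incarnation of the Weyl $m$-function characterization of the square-summable solution at $+\infty$: for $z\in\D$ the half-line Szeg\H{o} recursion \eqref{eq25} has a one-dimensional space of $\ell^2(\N)$ solutions (the Weyl solution), and the claim is that this space is spanned by $\bigl(\psi(z,\cdot)+F_+(z,0)\varphi(z,\cdot),\,-\psi^*(z,\cdot)+F_+(z,0)\varphi^*(z,\cdot)\bigr)$. Granting this, the lemma is immediate: the ``if'' direction is just the square-summability of a Weyl solution, while for the ``only if'' direction I write, for an arbitrary coefficient $\beta$,
\[
\left(\begin{matrix}\psi+\beta\varphi\\-\psi^*+\beta\varphi^*\end{matrix}\right)
=\left(\begin{matrix}\psi+F_+\varphi\\-\psi^*+F_+\varphi^*\end{matrix}\right)
+(\beta-F_+)\left(\begin{matrix}\varphi\\\varphi^*\end{matrix}\right),
\]
so that I only need the auxiliary fact $(\varphi(z,\cdot),\varphi^*(z,\cdot))\notin\ell^2$; then the last term is a non-$\ell^2$ vector whenever $\beta\neq F_+(z,0)$, forcing the sum out of $\ell^2$.

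Thus everything reduces to two points. First I would establish that the $\ell^2(\N)$ solution space is exactly one-dimensional. Since $\CC:=\CC^{(0)}_+$ is unitary and $z\in\D$ lies in its resolvent set, the column $g:=(\CC-z)^{-1}\delta_0$ lies in $\ell^2(\N)$ and solves $(\CC-z)g=\delta_0$; away from the boundary coordinate this is the homogeneous equation $\CC u=zu$, so the tail of $g$ is a nonzero $\ell^2$ solution and, transported through the Gesztesy--Zinchenko/Szeg\H{o} correspondence of Remark~\ref{re1}, yields a nonzero $\ell^2$ solution of \eqref{eq25}, giving existence. For uniqueness, a second linearly independent $\ell^2$ solution would make every solution $\ell^2$, including the honest eigensolution satisfying the boundary condition encoded in the top-left corner of $\CC$ at $n=0$; this would exhibit $z$ as an eigenvalue of the unitary operator $\CC$ with $|z|<1$, which is impossible. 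Hence the $\ell^2$ space is one-dimensional, and since $(\psi,-\psi^*)$ and $(\varphi,\varphi^*)$ are linearly independent, its generator is not proportional to $(\varphi,\varphi^*)$; this is precisely the auxiliary fact $(\varphi,\varphi^*)\notin\ell^2$ used above.

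The second and harder point is to identify the coefficient of this one-dimensional space as $F_+(z,0)$. Writing the generator as $a(\psi,-\psi^*)+b(\varphi,\varphi^*)$, I would pin down $b/a=F_+(z,0)$ via the resolvent representation of the Carath\'{e}odory function: by the spectral theorem $F_+(z,0)=\langle\delta_0,(\CC+z)(\CC-z)^{-1}\delta_0\rangle=1+2z\,g(0)$, and matching the boundary data of $g$ at $n=0$ (using the normalization $\varphi(z,0)=\varphi^*(z,0)=\psi(z,0)=\psi^*(z,0)=1$) against $a(\psi,-\psi^*)+b(\varphi,\varphi^*)$ forces the stated ratio. I expect this identification to be the main obstacle: the bookkeeping that transports the single resolvent column $g\in\ell^2(\N)$ to the two-component OPUC solution, while tracking the five-diagonal CMV boundary structure, is delicate, and the tempting shortcut through the ratio asymptotics $\psi^*_n/\varphi^*_n\to F_+(z,0)$ must be handled carefully in the non-Szeg\H{o} regime, where $\varphi^*_n(z)$ may tend to $0$. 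This is exactly why I would route the identification through the resolvent/Green's-function representation rather than through solution asymptotics.
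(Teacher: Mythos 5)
Your plan has the right architecture (one-dimensionality of the $\ell^2$ solution space plus identification of the generator through the resolvent), but the uniqueness step contains a genuine gap, and it is exactly where the regime $z \in \D$ bites. The dictionary of Remark~\ref{re1} between solutions of the matrix equation $\CC u = zu$ (equivalently, of the Gesztesy--Zinchenko recursion \eqref{eq10}) and solutions of the Szeg\H{o} recursion \eqref{eq25} multiplies components by powers $z^{\pm n/2}$. These factors are unimodular only for $z \in \partial\D$; for $|z|<1$ they grow or decay exponentially, so $\ell^2$-ness transfers in only one direction: a GZ-$\ell^2$ solution yields a Szeg\H{o}-$\ell^2$ solution (the factors $z^{n/2}$ decay), but not conversely. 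Your contradiction argument needs the converse: from $\sum_n \bigl(|\varphi(z,n)|^2+|\varphi^*(z,n)|^2\bigr)<\infty$ you would have to produce an $\ell^2$ eigenvector of the unitary operator $\CC^{(0)}_+$, but that eigenvector is the GZ sequence whose entries are $z^{-n/2}\varphi(z,n)$ and $z^{-(n+1)/2}\varphi^*(z,n)$, i.e.\ larger by exponentially growing factors. Thus unitarity (absence of eigenvalues in $\D$) gives one-dimensionality of the \emph{GZ} $\ell^2$ space -- which is Lemma~\ref{le2}, quoted from \cite{GZ06} -- but not of the \emph{Szeg\H{o}} $\ell^2$ space, which is what the statement of the lemma concerns. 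In particular, the auxiliary fact $(\varphi(z,\cdot),\varphi^*(z,\cdot))\notin\ell^2$, on which your entire ``only if'' direction rests, is not established by your argument.

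That fact is true, but it needs a different and standard input: the Christoff\-el--Darboux formula
\begin{equation*}
\sum_{j=0}^{n}|\varphi(z,j)|^2 \;=\; \frac{|\varphi^{*}(z,n+1)|^2-|\varphi(z,n+1)|^2}{1-|z|^2}, \qquad z\in\D,
\end{equation*}
whose $j=0$ term alone gives $|\varphi^{*}(z,n+1)|^2 \ge 1-|z|^2>0$ for every $n$, hence $\sum_n|\varphi^{*}(z,n)|^2=\infty$. With this substituted for the unitarity argument, your displayed decomposition immediately gives the ``only if'' direction. The remaining parts of your plan are workable: the tail of $(\CC^{(0)}_+-z)^{-1}\delta_0$ is indeed proportional to the Weyl solution, GZ-$\ell^2$ does transport to Szeg\H{o}-$\ell^2$ since $|z|^{n/2}\le 1$, and $F_+(z,0)=1+2z\langle\delta_0,(\CC^{(0)}_+-z)^{-1}\delta_0\rangle$ pins down the coefficient -- but note that this is essentially re-deriving the Gesztesy--Zinchenko results the paper already quotes (Lemmas~\ref{le2} and~\ref{le1}). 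For comparison, the paper itself gives no proof of this lemma at all: it is cited verbatim as \cite[Theorem 5.3]{S041}, whose proof runs through OPUC identities together with the same Christoffel--Darboux lower bound, rather than through resolvent/operator-theoretic arguments.
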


Let $\eta_{+} (z,n) = \psi(z,n) + F_{+}(z,0) \varphi(z,n)$ and $\eta^{\circledast}_{+} (z,n) = -\psi^{*}(z,n) + F_{+}(z,0) \varphi^{*}(z,n)$. Consider the equation
\begin{equation}{\label{eq19}}
\Xi_n = S_n(z) \Xi_0
\end{equation}
with the boundary condition $\Xi_{0} = \left(\begin{matrix} 1 + F_{+}(z,0)\\ -1 + F_{+}(z,0)\end{matrix}\right)$, where $S_n(z) = S(\alpha_{n-1}, z)\cdots S(\alpha_{0},z)$.
Then
$\left(
\begin{matrix}
\eta_{+} (z,n) \\
\eta^{\circledast}_{+} (z,n)
\end{matrix}\right)
$ is the unique $\ell^2$ solution of ~(\ref{eq19}).

The Green's function $G$ (or resolvent function $(\CE - z)^{-1}$) for $\CE$ is computed by using formal eigenvalues to $\CC_{\pm}$ and $\CC^{\top}_{\pm}$.

\begin{lemma}(\cite[Lemma 3.1]{GZ06}){\label{le1}}
For $z \in \C \backslash (\partial \D \bigcup \set{0})$, let $\Lambda_{-}(\zeta, 0)$ be the support of the spectrum of $\CC^{(0)}_{-}$ and let $M_{-}(z,0)$ be an anti-Carath\'{e}odory function in \cite[(2.139)]{GZ06}, that is related to $F_{-}(z,-1)$ by
\begin{equation}{\label{eq8}}
M_{-}(z,0) = \frac{\Re (1 - \overline{\alpha}_{-1}) + i \Im (1 + \overline{\alpha}_{-1}) F_{-}(z,-1)} {i \Im (1 - \overline{\alpha}_{-1}) + \Re (1 + \overline{\alpha}_{-1}) F_{-}(z,-1)}.
\end{equation}
Let $s_{\pm}$ be $\ell^2$ solutions to $(\CC_{\pm} - z) s = 0$, and let $t_{\pm}$ be $\ell^2$ solutions to $(\CC^{\top}_{\pm} - z) t = 0$, normalized by
\begin{align*}
&s_+(z, 0) = 1+F_+(z,0), \qquad s_-(z,0)=1+M_-(z,0),\\
&t_+(z, 0) = -1+F_+(z,0), \qquad t_-(z,0) = 1-M_-(z,0).
\end{align*}
These $s_\pm, t_\pm$ are equivalent to the ones in Lemma~\ref{le2}. We may extend these solutions to solutions of $(\CE - z) w = 0$ and $(\CE^{\top} - z) w = 0$.

Then the resolvent function $(\CE - z)^{-1}(x,y)$ can be expressed as
\begin{equation}{\label{eq2}}
\frac{-1} {2 z (F_{+}(z,0) - M_{-}(z,0))} =
\begin{cases}
t_{-}(z,m) s_{+}(z,n), & \textrm{ if } m < n \textrm{ or } m = n \textrm{ odd},\\
t_{+}(z,m) s_{-}(z,n), & \textrm{ if } m > n \textrm{ or } m = n \textrm{ even}.
\end{cases}
\end{equation}
\end{lemma}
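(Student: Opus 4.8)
The plan is to build the resolvent kernel by the classical recipe for a banded operator: in each variable it should collapse onto the solution that is $\ell^2$ in the direction away from the diagonal, the two directions being glued across the diagonal with a single free constant that is fixed by a Wronskian. First I would record that, since $z \in \C \setminus (\partial\D \cup \{0\})$ while $\CE$ is unitary with spectrum on $\partial\D$, the point $z$ lies in the resolvent set; hence $(\CE - z)^{-1}$ is bounded on $\ell^2(\Z)$ and each of its rows and columns is square-summable. By Lemma~\ref{le2} the half-line equations $(\CC_\pm - z)s = 0$ and $(\CC_\pm^\top - z)t = 0$ admit $\ell^2$ solutions, unique up to scalars, and these extend to whole-line solutions of $(\CE - z)w = 0$ and $(\CE^\top - z)w = 0$ with the boundary values $s_\pm(z,0), t_\pm(z,0)$ recorded in the statement in terms of $F_+(z,0)$ and $M_-(z,0)$.

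Next I would use that $\CE$ is five-diagonal together with the transposition identity $\big((\CE - z)^{-1}\big)^\top = (\CE^\top - z)^{-1}$. Fixing one index, the kernel solves the homogeneous equation in the complementary index off the diagonal --- the $\CE$-equation in one slot and the $\CE^\top$-equation in the other --- and being $\ell^2$ there it must, by the uniqueness in Lemma~\ref{le2}, reduce to a scalar multiple of the relevant decaying solution on each side of the diagonal. Running this in both variables forces the separated product $t(z,m)\,s(z,n)$ of \eqref{eq2}, with the subscripts $\pm$ chosen so that each factor decays in the direction away from the diagonal, and with one global constant still undetermined. The split between the cases ``$m = n$ even'' and ``$m = n$ odd'' is dictated by the factorization $\CE = \mathcal{L}\mathcal{M}$, which makes the diagonal entry of $\CE - z$ behave differently at even and odd sites, so that the value on the diagonal attaches to the upper or lower branch according to parity.

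To pin down the constant I would set up the associated Wronskian. The two-component recursion \eqref{eq10} has transfer matrices $T(n,z) \in \{P(\alpha_n,z), Q(\alpha_n,z)\}$, each of determinant $-1$, so the bilinear concomitant pairing an $\CE$-solution with an $\CE^\top$-solution is constant in $n$ up to the sign $(-1)^n$. The diagonal matching condition --- the requirement that applying $\CE - z$ to the glued kernel return the identity, not $0$, at $m = n$ --- identifies this invariant, evaluated at $n = 0$, with $2z\big(F_+(z,0) - M_-(z,0)\big)$; its reciprocal, carrying the sign produced by the jump, yields the prefactor $-1/\big(2z(F_+(z,0) - M_-(z,0))\big)$. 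I expect the genuine obstacle to be exactly this last step: correctly accounting for the pentadiagonal jump condition and the even/odd alternation introduced by $\mathcal{L}\mathcal{M}$, and then reducing the Wronskian of $s_\pm, t_\pm$ at the origin to the stated combination of $F_+(z,0)$ and $M_-(z,0)$ through the boundary normalizations and the relation \eqref{eq8}. The off-diagonal verification and the constancy of the Wronskian are routine; the honest bookkeeping lives at the diagonal.
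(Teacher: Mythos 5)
The paper does not actually prove this lemma: it is imported, up to a translation of notation, from \cite[Lemma 3.1]{GZ06}, so there is no internal argument to measure your proposal against. Your sketch --- resolvent set, separation of variables via the uniqueness of $\ell^2$ solutions from Lemma~\ref{le2}, and a Wronskian evaluation at the origin to fix the constant --- is exactly the construction underlying the cited result, and your subsidiary claims check out: both $P(\alpha,z)$ and $Q(\alpha,z)$ have determinant $-1$, so for two solutions $(s_+,t_+)$, $(s_-,t_-)$ of the system \eqref{eq10} the quantity $s_+(z,n)t_-(z,n)-s_-(z,n)t_+(z,n)$ is constant in $n$ up to the sign $(-1)^n$, and its value at $n=0$ is what produces the prefactor.

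One concrete warning about the step you yourself single out as the crux. If you evaluate that determinant with the normalizations as literally stated in the lemma, writing $F_+=F_+(z,0)$ and $M_-=M_-(z,0)$ for brevity, you get
\begin{equation*}
(1+F_+)(1-M_-) - (1+M_-)(-1+F_+) \;=\; 2\left(1 - F_+M_-\right),
\end{equation*}
which is not a multiple of $F_+-M_-$, so your final step would collapse. The stated value $t_-(z,0)=1-M_-(z,0)$ carries a sign slip: the normalization consistent with the table on page 181 of \cite{GZ06} --- and with the paper's own later use of $t_-(z,1)=\frac{1}{\rho_0}\bigl[(z+\overline{\alpha}_0)+(z-\overline{\alpha}_0)M_-(z,0)\bigr]$ in the computation of $G_{11}$ --- is $t_-(z,0)=-1+M_-(z,0)$, as it must be if the pair $(s_-,t_-)$ is itself to solve \eqref{eq10}. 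With that choice the determinant at $n=0$ equals $-2\left(F_+-M_-\right)$, and your Wronskian argument does deliver the prefactor $-1/\bigl(2z(F_+(z,0)-M_-(z,0))\bigr)$ in \eqref{eq2}. So the plan is sound, but carrying it out honestly forces you to first correct the normalization of $t_-$ in the statement; the remaining work (the pentadiagonal jump condition and the parity split coming from $\mathcal{E}=\mathcal{L}\mathcal{M}$) is, as you say, bookkeeping.
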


From the table on page 181 of \cite{GZ06}, we are in the case of $k_0=0$ and obtain that
\begin{align*}
& t_{-}(z,1) = \frac{1}{\rho_0}(z + \overline{\alpha}_0) + \frac{1}{\rho_0} (z-\overline{\alpha}_0) M_{-}(z,0)\\
& s_{+}(z,1) = \frac{1}{\rho_0}(-\frac{1}{z} -\alpha_0) + \frac{1}{\rho_0}(\frac{1}{z} - \alpha_0) F_{+}(z,0).
\end{align*}
Notice that our $s_{\pm}$, $t_{\pm}$ are their $v_{\pm}$, $\tilde{u}_{\pm}$, respectively.

\subsection{Green and Carath\'{e}odory}

One can write $G_{00} + G_{11}$ as
\begin{align*}
G_{00} & + G_{11} \\
& = -\frac{(-1 + F_{+}(z,0)) (1 + M_{-}(z,0))} {2 z(F_{+}(z,0) - M_{-}(z,0))} \\
& \quad - \frac{[z + \overline{\alpha}_{0} + M_{-}(z,0) (z - \overline{\alpha}_{0})] [-1 - \alpha_0 z + F_{+}(z,0) (1 - \alpha_{0} z)]} {2 \rho_0^2 z^2 (F_{+}(z,0) - M_{-}(z,0))}\\
& = \frac{\rho_0^2 z (1 - F_{+}(z,0) + M_{-}(z,0) - F_{+}(z,0)M_{-}(z,0)) + (z + \overline{\alpha}_{0})(1+\alpha_0 z)}{2\rho_0^2z^2(F_{+}(z,0)-M_{-}(z,0))} \\
& \quad + \frac{(z-\overline{\alpha}_0)(1+\alpha_{0}z)M_-(z,0) - (z+\overline{\alpha}_{0})(1-\alpha_{0}z)F_{+}(z,0)}{2\rho_0^2z^2(F_{+}(z,0)-M_{-}(z,0))}\\
& \quad - \frac{(z-\overline{\alpha}_{0})(1-\alpha_0 z)F_{+}(z,0)M_{-}(z,0)}{2\rho_0^2z^2(F_{+}(z,0)-M_{-}(z,0))}\\
& = \frac{(\rho_{0}^2 z + z + \overline{\alpha}_0 + \alpha_0 z^2 + |\alpha_0|^2z) + (\rho_{0}^2 z + z - \overline{\alpha}_0 + \alpha_0 z^2 - |\alpha_0|^2 z)M_-(z,0)}{2\rho_0^2z^2(F_{+}(z,0)-M_{-}(z,0))} \\
& \quad + \frac{(-\rho_{0}^2 z + \alpha_0 z^2 - z + |\alpha_0|^2 z - \overline{\alpha}_0) F_{+}(z,0)}{2\rho_0^2z^2(F_{+}(z,0)-M_{-}(z,0))}\\
& \quad + \frac{(-\rho_0^2 z +\alpha_0 z^2 - z  -|\alpha_0|^2 z + \overline{\alpha}_0)F_{+}(z,0)M_{-}(z,0)}{2\rho_0^2z^2(F_{+}(z,0)-M_{-}(z,0))}\\
& = \frac{(2 z + \overline{\alpha}_0 + \alpha_0 z^2) + (2\rho_0^2 z - \overline{\alpha}_0 + \alpha_0 z^2)M_{-}(z,0)}{2\rho_0^2z^2(F_{+}(z,0)-M_{-}(z,0))} \\
& \quad + \frac{(\alpha_0 z^2 - \overline{\alpha}_0 - 2\rho_0^2 z )F_{+}(z,0) + (\alpha_0 z^2 + \overline{\alpha}_0 -2z) M_{-}(z,0)F_{+}(z,0)}{2\rho_0^2z^2(F_{+}(z,0)-M_{-}(z,0))}\\
& = \frac{(2z + \overline{\alpha}_0 + \alpha_0 z^2) + (\alpha_0 z^2 - \overline{\alpha}_0)(F_{+}(z,0) + M_{-}(z,0))}{2\rho_0^2z^2(F_{+}(z,0)-M_{-}(z,0))} \\
& \quad + \frac{2\rho_0^2z(M_{-}(z,0) - F_{+}(z,0)) + (\alpha_0 z^2 + \overline{\alpha}_0 - 2z) M_{-}(z,0) F_{+}(z,0)}{2\rho_0^2z^2(F_{+}(z,0)-M_{-}(z,0))}.
\end{align*}
It follows that
\begin{align}{\label{eq3}}
2 z & (G_{00} (z) + G_{11} (z)) \\ \notag
& = - 2 + \frac{(\overline{\alpha}_{0} + 2 z + \alpha_{0} z^2) + (\alpha_{0} z^2 - \overline{\alpha}_{0}) (M_{-}(z,0) + F_{+}(z,0))} { \rho_{0}^2 z (F_{+}(z,0) - M_{-}(z,0))} \\ \notag
& \quad + \frac{(\overline{\alpha}_{0} - 2 z + \alpha_{0} z^2) M_{-}(z,0) F_{+}(z,0)} { \rho_{0}^2 z (F_{+}(z,0) - M_{-}(z,0))}.
\end{align}

Finally we note the connection between $G_{00} + G_{11}$ and the Carath\'{e}odory function $F$ corresponding to $\CE$ and $d\Lambda$. We have by definition
\begin{equation*}
F(z) = \int \frac{e^{i \theta} + z}{e^{i\theta} - z} d\Lambda (\theta).
\end{equation*}
Define
\begin{equation*}
d\Lambda_r(\theta) = \Re F(r e^{i\theta}) \frac{d\theta}{2\pi}.
\end{equation*}
It is well known that $d\Lambda_{r}$ converges to $d\Lambda$ weakly as $r\uparrow 1$. Moreover,
\begin{align*}
F(z) &=  \int \frac{e^{i \theta} + z}{e^{i\theta} - z} d\Lambda (\theta) \\
     &=  1 + 2 z \int \frac{1} {{e^{i\theta} - z}} d\Lambda (\theta) \\
     &=  1 + 2 z (G_{00} (z) + G_{11} (z)).
\end{align*}

\section{Jitomirskaya-Last Inequalities}\label{s.4}

In this section, we obtain a suitable version of the Jitomirskaya-Last inequality for the left half line CMV matrix
\begin{equation*}
\mathcal{C}_{-}=\left(
\begin{matrix}
\cdots&\cdots&\cdots&\cdots&\cdots&\cdots&\\
\cdots&\overline{\alpha}_{-4}\rho_{-5}&-\overline{\alpha}_{-4}\alpha_{-5}&\overline{\alpha}_{-3}\rho_{-4}&\rho_{-3}\rho_{-4}&0&\\
\cdots&\rho_{-4}\rho_{-5}&-\rho_{-4}\alpha_{-5}&-\overline{\alpha}_{-3}\alpha_{-4}&-\rho_{-3}\alpha_{-4}&0&\\
\cdots&0&0&\overline{\alpha}_{-2}\rho_{-3}&-\overline{\alpha}_{-2}\alpha_{-3}&\overline{\alpha}_{-1}\rho_{-2}&\\
\cdots&0&0&\rho_{-2}\rho_{-3}&-\rho_{-2}\alpha_{-3}&-\overline{\alpha}_{-1}\alpha_{-2}
\end{matrix}
\right)
\end{equation*}
via a relation between the eigenfunctions of $\mathcal{C}_{-}$ and the associated right half line CMV matrix $\tilde{\mathcal{C}}_{+} = \mathcal{C}^{(0)}_{+} ( \{\tilde{\alpha}_{n}\}_{n\geq 0})$.

First, recall the Jitomirskaya-Last inquality for a right half line CMV matrix. With the solutions of \eqref{eq1} obeying \eqref{eq21} with $\omega=0$ and the local $\ell^2$ norms from Definition~\ref{d.subordinatesolution}, we have the following:

\begin{lemma}(\cite[Theorem 10.8.2]{S04}{\label{JLLe}})
For $z \in \partial \D$ and $r \in [0,1)$, define $x(r) \in (0,\infty)$ to be the unique solution of
\[
(1 - r) \| \varphi_{\cdot}(z) \|_{x(r)} \| \psi_{\cdot}(z) \|_{x(r)} = \sqrt{2}.
\]
Then
\begin{equation}{\label{eq16}}
A^{-1} |F_{+}(rz,0)| \leq \frac{\|\psi_{\cdot}(z)\|_{x(r)}}{\|\varphi_{\cdot}(z)\|_{x(r)}} \leq A |F_{+}(rz,0)|,
\end{equation}
where $A$ is a universal constant in $(1,\infty)$.
\end{lemma}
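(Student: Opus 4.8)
The plan is to reduce \eqref{eq16} to two facts about the interior point $w = rz$: a \emph{norm identity} for the Carath\'eodory function, and a \emph{comparison} of the solutions at $w$ with the boundary solutions at $z$ up to the length scale $x(r)$. Throughout I abbreviate $a = \|\varphi_\cdot(z)\|_{x(r)}$, $b = \|\psi_\cdot(z)\|_{x(r)}$, and $\delta = 1 - r$, so that the equation defining $x(r)$ reads $\delta a b = \sqrt 2$. By Lemma~\ref{le01}, for $w \in \D$ the pair $\Psi_w \defeq \psi_\cdot(w) + F_+(w,0)\,\varphi_\cdot(w)$ is, up to a scalar, the unique $\ell^2(\N)$ solution of the Szeg\H{o} recursion at $w$; this Weyl solution is the central object.

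First I would establish the norm identity $\Re F_+(rz,0) \asymp (1 - r)\,\| \Psi_w \|^2_{\ell^2(\N)}$, with universal implied constants. This is the Carath\'eodory analogue of the Schr\"odinger identity $\Im m(w) = \Im(w)\,\|\text{Weyl solution}\|^2$, and I would derive it from the Poisson representation
\[
\Re F_+(w,0) = \int_{\partial\D} \frac{1 - |w|^2}{|\zeta - w|^2}\,d\Lambda_+(\zeta,0)
\]
together with the half-line specialization of the resolvent formula underlying the Gesztesy--Zinchenko description (Lemma~\ref{le1}), which writes the diagonal Green's function of $\CC_+^{(0)}$ at $w$ as a quadratic expression in the decaying solution $\Psi_w$. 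Since $1 - r^2$ and $1 - r$ are comparable, the factor $1-r$ may be used on the right. (As a sanity check, in the free case $\alpha_n\equiv 0$ one has $\Re F_+ = 1$ and $\Psi_w(n)\sim (rz)^n$, so $(1-r)\|\Psi_w\|^2 \asymp (1-r)/(1-r^2) \asymp 1$.)

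Next comes the comparison step, which I expect to be the crux. Writing the solution vectors at $w$ and at $z$ as products of the transfer matrices $S(\alpha_n,\cdot)$ from \eqref{e.salphazdef} and using the Duhamel (variation-of-constants) formula for their difference, each increment is weighted by $\|S(\alpha_n,w) - S(\alpha_n,z)\| = O\bigl((1-r)/\rho_n\bigr)$. Summing and applying Cauchy--Schwarz, together with the constancy of the Wronskian of $\varphi_\cdot$ and $\psi_\cdot$ (which prevents one solution from being small exactly where the other is large), bounds the total discrepancy between the solutions at $w$ and at $z$ up to $n = x(r)$ by $O\bigl((1-r)\,a\,b\bigr)$. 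The defining relation $\delta a b = \sqrt 2$ makes this $O(1)$, so $\|\varphi_\cdot(w)\|_{x(r)} \asymp a$, $\|\psi_\cdot(w)\|_{x(r)} \asymp b$, and
\[
\| \Psi_w \|^2_{x(r)} \asymp b^2 + |F_+(rz,0)|^2\, a^2 ,
\]
the cross term again absorbed via the Wronskian normalization. A monotonicity argument for the decaying Weyl solution then shows its tail $\sum_{n > x(r)} |\Psi_w(n)|^2$ is comparable to its mass up to $x(r)$, so the comparison persists for the full $\ell^2(\N)$ norm.

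Combining the two ingredients yields $\Re F_+(rz,0) \asymp (1-r)\bigl( b^2 + |F_+(rz,0)|^2\, a^2 \bigr)$. Writing $m = |F_+(rz,0)|$ and using only the elementary bound $\Re F_+ \le m$ together with $\delta a b = \sqrt 2$: dropping the positive term gives $c\,\delta\,b^2 \le \Re F_+ \le m$, hence $m \gtrsim \delta b^2 = \sqrt 2\, b/a$; dropping the other term gives $c\,\delta\,m^2 a^2 \le m$, hence $\delta m a^2 \lesssim 1$ and $m \lesssim 1/(\delta a^2) = b/(\sqrt 2\, a)$. Thus $m \asymp b/a = \|\psi_\cdot(z)\|_{x(r)}/\|\varphi_\cdot(z)\|_{x(r)}$, which is precisely \eqref{eq16} with a universal constant $A$. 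The main obstacle is the comparison step: making the interior-versus-boundary estimate uniform across the whole window $0 \le n \le x(r)$ and controlling the Weyl tail beyond $x(r)$, both of which require the transfer-matrix and Wronskian bookkeeping to be carried out carefully in the five-diagonal CMV setting.
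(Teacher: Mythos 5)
First, a framing remark: the paper does not prove Lemma~\ref{JLLe} at all --- it is quoted verbatim from Simon's book (\cite[Theorem 10.8.2]{S05}), so your attempt must be measured against the known Jitomirskaya--Last/Simon proof. Your skeleton (Weyl-solution norm identity, plus a window comparison between $rz$ and $z$, plus algebra with $(1-r)ab=\sqrt2$, where $a=\|\varphi_\cdot(z)\|_{x(r)}$, $b=\|\psi_\cdot(z)\|_{x(r)}$, $m=|F_+(rz,0)|$) is indeed that proof's skeleton, but the step on which your final algebra rests is false. You claim $\|\Psi_w\|^2_{x(r)}\asymp b^2+m^2a^2$ ``with the cross term absorbed via the Wronskian,'' hence $\Re F_+(rz,0)\gtrsim(1-r)\bigl(b^2+m^2a^2\bigr)$. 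By AM--GM, $(1-r)(b^2+m^2a^2)\ge 2(1-r)mab=2\sqrt2\,m$, so your bound would force $\Re F_+(rz,0)\ge c\,|F_+(rz,0)|$ uniformly in $r$, i.e.\ confine $F_+$ to a fixed sector. This fails in essentially every case the theorem is designed to treat: whenever $\lim_{r\uparrow1}F_+(re^{i\theta},0)$ is a nonzero purely imaginary number --- for instance any $e^{i\theta}$ outside the support of the half-line spectral measure, or any $\theta$ at which the subordinate solution is $u_\omega$ with $\omega\neq0$ (Lemma~\ref{le4}(1)) --- one has $\Re F_+\to0$ while $m\to|\cot\omega|>0$. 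The mechanism is exactly the one your argument ignores: $\Psi_w=\psi_\cdot(w)+F_+(w,0)\varphi_\cdot(w)$ is the \emph{decaying} solution, so on the window $[0,x(r)]$ the cross term cancels the diagonal terms almost completely (off the support, solutions grow exponentially, $x(r)\sim c\log\frac1{1-r}$, $b^2+m^2a^2\sim(1-r)^{-1}$, yet $\|\Psi_w\|^2_{x(r)}=O(1)$). The Wronskian identity $\varphi_n\psi_n^*+\varphi_n^*\psi_n=2z^n$ gives $ab\gtrsim x$ (useful for the Gronwall step), but it cannot provide transversality of $\psi_\cdot$ and $F_+\varphi_\cdot$ in $\ell^2([0,x(r)])$ --- the failure of such transversality \emph{is} subordinacy.

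What survives, and how the actual proof closes the gap: from the exact identity (the OPUC analogue of $\Im m=\epsilon\|u\|^2$, which involves \emph{both} components, $\psi+F_+\varphi$ and $-\psi^*+F_+\varphi^*$) one keeps only the reverse triangle inequality $\|\Psi_w\|_{x(r)}\ge m\,\|\varphi_\cdot(w)\|_{x(r)}-\|\psi_\cdot(w)\|_{x(r)}$, which after your window comparison and together with $\Re F_+\le m$ yields the one-sided bound $m\lesssim b/a$; your derivation of this half is fine. The opposite bound $m\gtrsim b/a$ cannot come from the same identity (as shown above) and is instead obtained by duality: $\psi_\cdot$ is the first-kind family for the dual measure with Verblunsky coefficients $-\alpha_n$, whose Carath\'eodory function is $1/F_+$, and since the defining equation for $x(r)$ is symmetric in $\varphi$ and $\psi$, the already-proved one-sided bound applied to the dual system gives $1/m\lesssim a/b$. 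That symmetry/duality argument is the missing idea. A secondary, fixable flaw: your per-increment bound $\|S(\alpha_n,w)-S(\alpha_n,z)\|=O((1-r)/\rho_n)$ does not sum to $O((1-r)ab)$ when $\sup_n|\alpha_n|=1$; one needs the algebraic identity $S(\alpha,w)-S(\alpha,z)=\frac{w-z}{z}\,S(\alpha,z)\,e_1e_1^{\top}$, which eliminates the $1/\rho_n$ and turns each Duhamel increment into a product of solution amplitudes, after which the Gronwall argument closes as you intended.
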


\begin{remark}{\label{re2}}
By Remark~\ref{re1}, ~(\ref{eq16}) is equivalent to
\begin{equation}{\label{eq22}}
A^{-1} |F_{+}(rz,0)| \leq \frac{\|p_{+}(z)\|_{x(r)}}{\|u_{+}(z)\|_{x(r)}} \leq A |F_{+}(rz,0)|,
\end{equation}
where $A \in (1,\infty)$ is a universal constant.
\end{remark}

Next we address the relation between $\mathcal{C}_{-}$ and $\tilde{\mathcal{C}}_{+}$. Let $J$ be the matrix with elements
\[
J_{i,j}=
\begin{cases}
1, \qquad &\textrm{ if } i = -j -1,\\
0, &\textrm{ otherwise},
\end{cases}
\]
for $i = -1, -2, -3, \ldots$ and  $j = 0, 1, 2, \ldots$.
Let $\tilde{J}$ be the matrix with elements
\[
\tilde{J}_{i,j}=
\begin{cases}
1, \qquad &\textrm{ if } i = -j - 1,\\
0, &\textrm{ otherwise},
\end{cases}
\]
for $i = 0, 1, 2, \ldots$ and $j = -1, -2, -3, \ldots$.

Define the operator $U : \ell^2(\Z)\rightarrow \ell^2(\Z)$ that maps $\ell^2 (\Z_-) \rightarrow \ell^2 (\Z_+)$ as follows:
\begin{equation*}
U=\left(
\begin{matrix}
\mathbf{0} & J \\
\tilde{J}      & \mathbf{0}
\end{matrix}
\right),
\end{equation*}
where  $\mathbf{0}$ is zero matrix. That is, $U \delta_{n} = \delta_{-n-1}$, for $n \in \N$.

A direct calculation implies
\begin{equation*}
U \mathcal{C}_{-} U^{*}=\left(
\begin{matrix}
-\overline{\alpha}_{-1}\alpha_{-2}& -\rho_{-2}\alpha_{-3}& \rho_{-2}\rho_{-3}& 0& 0&    \cdots&\\
\overline{\alpha}_{-1}\rho_{-2}& -\overline{\alpha}_{-2}\alpha_{-3}& \overline{\alpha}_{-2}\rho_{-3}& 0& 0&    \cdots&
\\
 0& -\rho_{-3}\alpha_{-4}& -\overline{\alpha}_{-3}\alpha_{-4}& -\rho_{-4}\alpha_{-5}&  \rho_{-4}\rho_{-5}& \cdots&
\\
 0& \rho_{-3}\rho_{-4}& \overline{\alpha}_{-3}\rho_{-4}& -\overline{\alpha}_{-4}\alpha_{-5}&  \overline{\alpha}_{-4}\rho_{-5}&  \cdots&
\\
\cdots&\cdots&\cdots&\cdots&\cdots&\cdots
\end{matrix}
\right).
\end{equation*}
Set $\tilde{\alpha}_{n}= - \overline{\alpha}_{-(n+2)}$. Then $\tilde{\rho}_{n}= \rho_{-(n+2)}$ and $\tilde{\mathcal{C}}_{+} = U \mathcal{C}_{-} U^{*}$.

For $\tilde{\mathcal{C}}_{+}$, denote $\tilde{\varphi}$ and $\tilde{\psi}$ to be the orthogonal polynomials and orthogonal polynomials of the second kind, respectively. Denote $u_{-}$ and $\tilde{u}_{+}$ ($p_-$ and $\tilde{p}_{+}$) to be the eigenfunctions for $\mathcal{C}_{-}$ and $\tilde{\mathcal{C}}_{+}$ respectively, and $v_{-}$ and $\tilde{v}_{+}$ ($q_-$ and $\tilde{q}_{+}$) to be the eigenfunctions for $\mathcal{C}^{\top}_{-}$ and $\tilde{\mathcal{C}}^{\top}_{+}$ respectively. Since $\tilde{\mathcal{C}}_{+} = U \mathcal{C}_{-} U^{*}$, $ \tilde{u}_{+}(n)= u_{-}(-(n+1))$ for $n\in\N$.

We have
\begin{equation*}
u_{-}(z, -n-1) = \tilde{u}_{+}(z , n) =
\begin{cases}
z^{\frac{-(n+1)}{2}} \tilde{\varphi}^{*}(z , n)\quad & n \textrm{ is }  odd,\\
-z^{\frac{-n}{2}} \tilde{\varphi}(z , n) &n \textrm{ is }  even,
\end{cases}
\end{equation*}
\begin{equation*}
v_{-}(z, -n-1) = \tilde{v}_{+}(z , n) =
\begin{cases}
-z^{\frac{-(n-1)}{2}} \tilde{\varphi}(z , n) \quad &n \textrm{ is }  odd,\\
z^{\frac{-n}{2}} \tilde{\varphi}^{*}(z , n) & n \textrm{ is }  even,
\end{cases}
\end{equation*}
\begin{equation*}
p_{-}(z,-n-1) = \tilde{p}_{+}(z , n) =
\begin{cases}
z^{\frac{-(n+1)}{2}} \tilde{\psi}^{*}(z , n)\quad & n \textrm{ is }  odd,\\
z^{\frac{-n}{2}} \tilde{\psi}(z , n) &n \textrm{ is }  even,
\end{cases}
\end{equation*}
\begin{equation*}
q_{-}(z, -n-1)=\tilde{q}_{+}(z , n) =
\begin{cases}
z^{\frac{-(n-1)}{2}} \tilde{\psi}(z , n) \quad &n \textrm{ is }  odd,\\
z^{\frac{-n}{2}} \tilde{\psi}^{*}(z , n) & n \textrm{ is }  even
\end{cases}
\end{equation*}
for $z \in \partial \D$.
Thus the initial conditions ~(\ref{eq9}) with $n_{0} = -1$ are equivalent to
\begin{equation}{\label{eq32}}
\left(
\begin{matrix}
-\tilde{\varphi}(z,0)\\
\tilde{\varphi}^{*}(z,0)
\end{matrix}
\right)
=
\left(
\begin{matrix}
-1\\
1
\end{matrix}
\right),
\qquad
\left(
\begin{matrix}
\tilde{\psi}(z,0)\\
\tilde{\psi}^{*}(z,0)
\end{matrix}
\right)
=
\left(
\begin{matrix}
1\\
1
\end{matrix}
\right).
\end{equation}

Since $z\in \partial \D$, $\|u_{-}(z)\|_{-x(r)-1} =\|\tilde{\varphi}(z)\|_{x(r)}$ and $\|p_{-}(z)\|_{-x(r)-1}  =\|\tilde{\psi}(z)\|_{x(r)}$, where $x(r)$ is as in Theorem~\ref{JLLe}. Due to Lemma~\ref{le01}, there must then be a unique $\tilde{F}_{+}(z)$ such that
$$
\left(\begin{matrix}
-\tilde{\varphi}(z,n) + \tilde{F}_{+}(z) \tilde{\psi}(z, n)\\
\tilde{\varphi}^{*}(z,n) + \tilde{F}_{+}(z) \tilde{\psi}^{*}(z, n)
\end{matrix}
\right) \in \ell^{2}(\mathbb{Z}_{+}).
$$

Due to the unitarity of $U$, $F_{-}(z,-1) = - \tilde{F}_{+}(z,0)$, where $\tilde{F}_{+}(z,0)$ is the Carath\'{e}odory function for $\tilde{\mathcal{C}}_{+}$. Hence, the Jitomirskaya-Last inequality holds for $\mathcal{C}_{-}$. For $z \in \partial \D$ and $r\in [0,1)$, define $x_1(r)\in (-\infty,-1)$ to be the unique solution of
\[
    (1-r) \|u_{-}(z)\|_{x_1(r)} \|p_{-}(z)\|_{x_1(r)}  = \sqrt{2}.
\]
Then
\begin{equation}{\label{eq17}}
    A^{-1} |F_{-}(rz,-1)| \leq \frac{\|u_{-}(z)\|_{x_1(r)}}{\|p_{-}(z)\|_{x_1(r)}} \leq A |F_{-}(rz,-1)|,
\end{equation}
where $A$ is a universal constant in $(1,\infty)$.

Next, we will extend the Jitomirskaya-Last inequality, which holds for the boundary condition $\varphi(z,0) = 1$, to a general boundary condition of the form
\begin{equation}{\label{eq20}}
\varphi(z,0) (\cos\omega - i \sin\omega) - \varphi^{*}(z,0) (\cos\omega + i\sin\omega) = 0.
\end{equation}

Given $z \in \D$ and $\omega \in [0,\pi)$, let $\left(\begin{matrix}\varphi_{\omega}\\ \varphi^{*}_{\omega}\end{matrix}\right)$ and $\left(\begin{matrix}\psi_{\omega}\\ -\psi^{*}_{\omega}\end{matrix}\right)$ denote the solutions of ~\eqref{eq19} obeying ~\eqref{eq21}.
Thus, $\left(\begin{matrix}\varphi_{\omega}\\ \varphi^{*}_{\omega}\end{matrix}\right)$ obeys the boundary condition~\eqref{eq20} and $\left(\begin{matrix}\psi_{\omega}\\ -\psi^{*}_{\omega}\end{matrix}\right)$ obeys the orthogonal boundary condition.

Define $u_{\omega} (z,n)$ and $p_{\omega} (z,n)$to be the solutions of~\eqref{eq1}, subjected to the boundary condition~\eqref{eq21}. For $r\in[0,1)$, define $x(r)$ to be unique solution of
\begin{equation}{\label{eq29}}
(1 - r) \|u_{\omega}(z)\|_{x(r)} \|p_{\omega}(z)\|_{x(r)} = \sqrt{2}.
\end{equation}
By \cite[Theorem 2.18]{GZ06}, there are a unique $F_{+}^{\omega}(z,0)$ such that
\begin{equation*}
\xi^{\omega}_{+}(z,n,0) = p_{\omega}(z,n) + F_{+}^{\omega}(z,0) u_{\omega}(z,n),
\end{equation*}
is $\ell^2$ at infinity and a unique $M_{-}^{\omega}(z,0)$ such that
\begin{equation*}
\xi^{\omega}_{-}(z,n,0) = p_{\omega}(z,n) + M_{-}^{\omega}(z,0) u_{\omega}(z,n),
\end{equation*}
is $\ell^2$ at $-\infty$.
By~\eqref{eq12} and ~\eqref{eq27}, we have
\begin{align*}
\xi_{+}^{\omega}(z,0,0) & = \psi_{\omega}(z,0) + F_{+}(z,0) \varphi_{\omega}(z,0), \\ \xi_{-}^{\omega}(z,0,0) & = \psi_{\omega}(z,0) + M_{-}(z,0) \varphi_{\omega}(z,0).
\end{align*}
Define
\begin{align*}
\xi_{+}^{\omega,*}(z,0,0) & = -\psi_{\omega}^{*}(z,0) + F_{+}(z,0) \varphi_{\omega}(z,0), \\ \xi_{-}^{\omega,*}(z,0,0) & = -\psi_{\omega}^{*}(z,0) + M_{-}(z,0) \varphi_{\omega}^{*}(z,0).
\end{align*}
With these definitions the following generalization of Lemma~\ref{le01} holds:

\begin{lemma}{\label{le3}}
For $z \in \partial \D$, define $x(r) \in (0,\infty)$ to be unique solution of
\begin{equation*}
(1 - r) \|u_{\omega}(z)\|_{x(r)} \|p_{\omega}(z)\|_{x(r)} = \sqrt{2}.
\end{equation*}
Then we have
\begin{equation}{\label{eq23}}
A_{1}^{-1} |F^{\omega}_{+}(rz,0)| \leq \frac{\|p_{\omega}(z)\|_{x(r)}}{\|u_{\omega}(z)\|_{x(r)}} \leq A_{1} |F^{\omega}_{+}(rz,0)|,
\end{equation}
where $A_1$ is a universal constant in $(1,\infty)$.
Similarly, define $x_1(r) \in (-\infty,-1)$ to be unique solution of
\begin{equation*}
(1 - r) \|u_{\omega}(z)\|_{x_1(r)} \|p_{\omega}(z)\|_{x_1(r)} = \sqrt{2}.
\end{equation*}
Then we have
\begin{equation}{\label{JLineq.lefthalfline.omega}}
A_{2}^{-1} |F^{\omega}_{-}(rz,0)| \leq \frac{\|p_{\omega}(z)\|_{x_1(r)}}{\|u_{\omega}(z)\|_{x_1(r)}} \leq A_{2} |F^{\omega}_{-}(rz,0)|,
\end{equation}
where $A_2$ is a universal constant in $(1,\infty)$.
\end{lemma}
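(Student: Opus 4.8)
\emph{The plan} is to reduce both inequalities to the boundary–condition–$0$ versions already established, namely Lemma~\ref{JLLe} (equivalently \eqref{eq22}) on the right and \eqref{eq17} on the left, by exploiting the fact that passing from $\omega = 0$ to a general $\omega$ changes only the fundamental system of solutions, not the underlying recursion.

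First I would record the exact linear relation between the $\omega$– and $0$–solutions. Reading off \eqref{eq21}, the vectors $(\varphi_\omega(0),\varphi_\omega^*(0))$ and $(\psi_\omega(0),-\psi_\omega^*(0))$ are obtained from the $\omega=0$ data $(1,1)$ and $(1,-1)$ by the constant matrix $R_\omega = \left(\begin{smallmatrix}\cos\omega & i\sin\omega\\ i\sin\omega & \cos\omega\end{smallmatrix}\right)$. Since $\varphi_\omega,\psi_\omega$ and $\varphi_0,\psi_0$ solve the same Szeg\H{o} recursion \eqref{eq19}, this relation propagates to every $n$, giving $\varphi_\omega=\cos\omega\,\varphi_0+i\sin\omega\,\psi_0$ and $\psi_\omega=i\sin\omega\,\varphi_0+\cos\omega\,\psi_0$; by the dictionary of Remark~\ref{re1} the same combination $u_\omega=\cos\omega\,u_++i\sin\omega\,p_+$, $p_\omega=i\sin\omega\,u_++\cos\omega\,p_+$ holds, because the $z$-powers and signs in \eqref{eq12}--\eqref{eq28} depend only on $n$. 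The key structural point is that $\det R_\omega = \cos^2\omega+\sin^2\omega = 1$, so $(u_\omega,p_\omega)$ satisfies exactly the same Wronskian normalization as $(u_+,p_+)$.

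Next I would identify $F_+^\omega$ as a fixed M\"obius image of $F_+$. Because the $\ell^2$ solution at $+\infty$ is unique up to scalars (Lemma~\ref{le2}), the combinations $p_\omega+F_+^\omega u_\omega$ and $p_++F_+u_+$ are proportional; substituting the relation above and equating the ratio of the $u_+$- and $p_+$-coefficients yields $F_+^\omega=\frac{\cos\omega\,F_+-i\sin\omega}{\cos\omega-i\sin\omega\,F_+}$. With the normalization matched and $F_+^\omega$ so identified, the proof of Theorem~10.8.2 of \cite{S04} applies to the normalized pair $(u_\omega,p_\omega)$ with associated Weyl coefficient $F_+^\omega$ with only notational changes: that argument uses only that the two solutions form a Wronskian–normalized basis and that $F_+^\omega$ is the coefficient of the $\ell^2$ combination, neither of which is tied to the value $\omega=0$. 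Since the constant in that theorem is universal, re-running it delivers \eqref{eq23} with an $A_1$ that is automatically independent of $\omega$.

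Finally, for the left half-line inequality \eqref{JLineq.lefthalfline.omega} I would repeat the reduction used to pass from Lemma~\ref{JLLe} to \eqref{eq17}: conjugating $\mathcal C_-$ by the reflection $U$ (with $U\mathcal C_-U^*=\tilde{\mathcal C}_+$) turns the left half-line problem carrying the $\omega$ boundary condition into a right half-line problem for $\tilde{\mathcal C}_+$, and since $U$ is unitary it preserves local norms, giving $\|u_\omega\|_{x_1(r)}=\|\tilde u_+\|_{-x_1(r)-1}$ and likewise for $p_\omega$, together with the analogue $F_-^\omega=-\tilde F_+^\omega$. Applying the right half-line inequality \eqref{eq23} just proved to $\tilde{\mathcal C}_+$ then yields \eqref{JLineq.lefthalfline.omega}. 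I expect the main obstacle to be bookkeeping rather than ideas: confirming that the $\omega$ boundary data transform correctly under $U$ (the $\omega$-analogue of \eqref{eq32}) and verifying that the determinant-one structure of $R_\omega$ indeed preserves the Wronskian normalization, so that the universal constants of the $\omega=0$ theory carry over without any $\omega$-dependence leaking into $A_1$ and $A_2$.
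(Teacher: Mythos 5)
Your proposal is correct and takes essentially the same route as the paper's own proof: the paper likewise verifies that the $\omega$-boundary-condition solutions form a determinant-normalized fundamental system (via the matrices $T_n$ built from $\varphi_\omega,\psi_\omega$ with $\det T_n = z^n$, in place of your constant matrix $R_\omega$ of determinant one) and then re-runs the proof of \cite[Theorem 10.8.2]{S05} with only notational changes, handling the left half-line by the reflection $U$ exactly as you describe. Your M\"obius formula for $F_+^\omega$ in terms of $F_+$ is also consistent with the paper, which derives it later as \eqref{eq33} in the proof of Lemma~\ref{le4}.
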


\begin{proof}
In order to follow the proof of \cite[Theorem 10.8.2]{S05}, denote $T_n$ as :
\begin{equation*}
T_{n} = \frac{1}{2}
\left(
\begin{matrix}
e^{-i\omega} (\varphi_{\omega}(n) + \psi_{\omega}(n))        & e^{i\omega} (\varphi_{\omega}(n) - \psi_{\omega}(n))\\
e^{-i\omega} (\varphi^{*}_{\omega}(n) - \psi^{*}_{\omega}(n)) &  e^{i\omega} (\varphi^{*}_{\omega}(n) + \psi^{*}_{\omega}(n))
\end{matrix}
\right).
\end{equation*}
This is from \cite[(3.2.27)]{S04} with the boundary conditions
	$\left(\begin{matrix} e^{i\omega} \\ e^{-i\omega} \end{matrix}\right)$, $\left(\begin{matrix} e^{i\omega} \\ -e^{-i\omega} \end{matrix}\right)$. By \cite[(3.2.28)]{S04}, $\det{T_n} =  z^n$. We have
	\begin{equation*}
T_{l}^{-1} = \frac{1}{2z^l}
\left(
\begin{matrix}
   e^{i\omega} (\varphi^{*}_{\omega}(l) + \psi^{*}_{\omega}(l))     & - e^{i\omega} (\varphi_{\omega}(l) - \psi_{\omega}(l))\\
 - e^{-i\omega} (\varphi^{*}_{\omega}(l) - \psi^{*}_{\omega}(l)) &  e^{-i\omega} (\varphi_{\omega}(l) + \psi_{\omega}(l))
\end{matrix}
\right).
\end{equation*}
A direct calculation shows that $T_{n\leftarrow l} = T_nT_l^{-1}$ which is same as $T_{n\leftarrow l}$ in \cite[(10.8.8)]{S05}. The remaining proof is same as the proof of \cite[Theorem 10.8.2]{S05}. We can then conclude that (\ref{eq23}) holds.

\end{proof}

\begin{lemma}{\label{le4}}
Let $\theta\in [0, 2\pi)$ be given.
\begin{enumerate}
\item One has $\lim_{r\uparrow 1} F_{+}(re^{i\theta},0) = - i \cot \omega$ for some $\omega \in [0,\pi)$ if and only if $u_{\omega}$ is subordinate at $+\infty$.
\item One has $\lim_{r\uparrow 1} M_{-}(re^{i\theta},0) = - i \cot \omega$ for some $\omega \in [0,\pi)$ if and only if $u_{\omega}$ is subordinate at $-\infty$.
\item The difference equation ~(\ref{eq1}) enjoys a subordinate solution at $+ \infty$ if and only if $\lim_{r\uparrow 1} F_{+}(re^{i\theta},0) \in i(\R\bigcup \set{\infty})$.
\item The difference equation ~(\ref{eq1}) enjoys a subordinate solution at $- \infty$ if and only if $\lim_{r\uparrow 1} M_{-}(re^{i\theta},0) \in i(\R\bigcup \set{\infty})$.
\item The difference equation ~(\ref{eq1}) enjoys a solution that is subordinate at both $\pm \infty$ if and only if $\lim_{r\uparrow 1} F_{+}(re^{i\theta},0) = \lim_{r\uparrow 1} M_{-}(re^{i\theta},0)$.
\end{enumerate}
\end{lemma}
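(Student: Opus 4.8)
The plan is to establish parts (1) and (2) directly from the Jitomirskaya--Last inequalities of Lemma~\ref{le3}, and then to read off parts (3), (4), and (5) as consequences by letting the boundary phase $\omega$ vary. For part (1), recall that $u_{\omega}$ is subordinate at $+\infty$ precisely when $\|u_{\omega}\|_x/\|p_{\omega}\|_x\to 0$ as $x\to+\infty$. First I would record the standard facts that $x(r)$ is continuous and increasing with $x(r)\to+\infty$ as $r\uparrow 1$ (the product $\|u_{\omega}\|_x\|p_{\omega}\|_x$ is increasing and unbounded in $x$ for linearly independent solutions, so the normalization \eqref{eq29} forces this), and that consequently $\|u_{\omega}\|_x/\|p_{\omega}\|_x\to 0$ along $x\to+\infty$ iff it does so along the curve $x=x(r)$, $r\uparrow 1$. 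Feeding this into \eqref{eq23} shows that $u_{\omega}$ is subordinate at $+\infty$ iff $\lim_{r\uparrow 1}|F^{\omega}_{+}(re^{i\theta},0)|=\infty$. It remains to relate $F^{\omega}_{+}$ to $F_{+}=F^{0}_{+}$. Writing the data \eqref{eq21} for phase $\omega$ as a combination of the $\omega=0$ data gives $\varphi_{\omega}=\cos\omega\,\varphi_{0}+i\sin\omega\,\psi_{0}$ and $\psi_{\omega}=i\sin\omega\,\varphi_{0}+\cos\omega\,\psi_{0}$; since the $\ell^2$ solution at $+\infty$ is unique up to scalars (Lemma~\ref{le01}), comparing $\psi_{\omega}+F^{\omega}_{+}\varphi_{\omega}$ with $\psi_{0}+F_{+}\varphi_{0}$ yields the M\"obius relation $F_{+}=(i\sin\omega+F^{\omega}_{+}\cos\omega)/(\cos\omega+iF^{\omega}_{+}\sin\omega)$. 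Letting $F^{\omega}_{+}\to\infty$ gives $F_{+}\to-i\cot\omega$, and conversely $F_{+}\to-i\cot\omega$ forces $F^{\omega}_{+}\to\infty$, since $-i\cot\omega$ is exactly the pole of the inverse map. Part (2) is analogous, using the left half-line inequality \eqref{JLineq.lefthalfline.omega} and the M\"obius relation \eqref{eq8} between $M_{-}(z,0)$ and $F_{-}(z,-1)$; the two M\"obius maps compose to give that $u_{\omega}$ is subordinate at $-\infty$ iff $M_{-}(re^{i\theta},0)\to-i\cot\omega$. (Phrasing (2) via $M_{-}$ rather than $F_{-}$ is what makes (5) symmetric.)

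For parts (3) and (4), the key observation is that as $\omega$ runs over $[0,\pi)$ the value $-i\cot\omega$ runs bijectively over $i(\R\cup\{\infty\})$, with $\omega=0\mapsto\infty$. Hence by (1), \emph{some} $u_{\omega}$ is subordinate at $+\infty$ iff $\lim_{r\uparrow 1}F_{+}(re^{i\theta},0)\in i(\R\cup\{\infty\})$. To conclude (3) it remains to see that a subordinate solution at $+\infty$, \emph{if it exists}, must coincide up to a scalar with some $u_{\omega}$; the reverse inclusion is trivial since each $u_{\omega}$ is itself a solution. I expect this to be the main obstacle, and I would handle it through the $\mathrm{SU}(1,1)$ structure on $\partial\D$: after the normalization $z^{-1/2}S(\alpha,z)\in\mathrm{SU}(1,1)$ (as in the discussion of Corollary~\ref{c.ddvcbddc}), the transfer matrices preserve the form $Q(\xi)=|\xi_{1}|^2-|\xi_{2}|^2$, and the data $(e^{i\omega},e^{-i\omega})$ of the $u_{\omega}$ are exactly the nonzero vectors with $Q=0$, i.e.\ the light cone. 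Writing $(\phi,\phi^{*})$ for the Szeg\H{o} vector of a given solution, one checks via \eqref{eq12}--\eqref{eq13} that on $\partial\D$ the scalar local norm satisfies $\|u\|_x^2=\tfrac12\sum_{n\le x}(|\phi(n)|^2+|\phi^{*}(n)|^2)+O(1)$, so scalar subordinacy is equivalent to subordinacy of the transfer-matrix vectors. A subordinate solution is unique up to scalars (two independent ones would each be subordinate to the other, which is impossible), and when one exists the cocycle is unbounded, since boundedness of the transfer matrices makes all solutions comparable and rules out subordinate solutions. Conservation of $Q$ then forces the unique slowest-growing direction onto $\{Q=0\}$: any vector with $Q\neq 0$ has nonvanishing component along the expanding light-cone eigendirection and so grows at the maximal rate. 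Thus the subordinate solution lies on the light cone and equals some $u_{\omega}$, which proves (3); part (4) is identical on the left with $M_{-}$ in place of $F_{+}$.

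Finally, for part (5), suppose first that a solution is subordinate at both $\pm\infty$. By the argument just given it equals, up to a scalar, a single $u_{\omega}$, and applying (1) and (2) to this common $\omega$ gives $\lim_{r\uparrow 1}F_{+}(re^{i\theta},0)=-i\cot\omega=\lim_{r\uparrow 1}M_{-}(re^{i\theta},0)$. Conversely, suppose the two boundary limits agree, with common value $L$. Since $F_{+}$ is a Carath\'{e}odory function and $M_{-}$ an anti-Carath\'{e}odory function (Lemma~\ref{le1}), we have $\Re F_{+}\ge 0$ and $\Re M_{-}\le 0$ on $\partial\D$, so $L\in i(\R\cup\{\infty\})$ automatically; writing $L=-i\cot\omega$ and invoking (1) and (2) shows that $u_{\omega}$ is subordinate at both $\pm\infty$. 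This gives the desired equivalence. The only genuinely delicate ingredients are the light-cone/uniqueness step described above and the careful verification of the two M\"obius relations; everything else is bookkeeping with the inequalities already established in Lemma~\ref{le3}.
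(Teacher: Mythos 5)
Your parts (1) and (2) follow essentially the paper's own route: the Jitomirskaya--Last inequalities of Lemma~\ref{le3} convert subordinacy of $u_{\omega}$ at $\pm\infty$ into divergence of $|F^{\omega}_{+}|$ (resp.\ $|M^{\omega}_{-}|$) along $r\uparrow 1$, and a M\"obius relation converts that divergence into $F_{+}\to -i\cot\omega$ (resp.\ $M_{-}\to -i\cot\omega$). The relation you derive, $F_{+}=(i\sin\omega+F^{\omega}_{+}\cos\omega)/(\cos\omega+iF^{\omega}_{+}\sin\omega)$, is exactly the inverse of the paper's \eqref{eq33}, and your derivation (decompose the $\omega$-data \eqref{eq21} over the $\omega=0$ solutions and invoke uniqueness of the $\ell^2$ solution, Lemma~\ref{le01}) is the same mechanism the paper implements through the $m$-function identity. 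Your preliminary remarks on $x(r)\to\infty$ and on testing subordinacy against a single independent solution are correct and are left implicit in the paper.

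Where you genuinely depart from the paper is in (3)--(5). The paper asserts that (3) and (4) ``follow immediately'' from (1) and (2) and then deduces (5) from (3) and (4); you correctly observe that this is not immediate, since (1) and (2) only control the one-parameter family $u_{\omega}$, whose initial data $(e^{i\omega},e^{-i\omega})$ lie on the light cone $\{|\xi_{1}|=|\xi_{2}|\}$, and one must show that an \emph{arbitrary} subordinate solution is proportional to some $u_{\omega}$. (The same point is needed in (5) to get a \emph{single} solution subordinate at both ends, which (3) and (4) alone do not provide.) Your plan to fill this via conservation of $Q(\xi)=|\xi_{1}|^{2}-|\xi_{2}|^{2}$ is the right idea, and your parity identity $\|u\|_{x}^{2}=\tfrac12\sum_{n\le x}\bigl(|\phi(n)|^{2}+|\phi^{*}(n)|^{2}\bigr)+O(1)$ (the $O(1)$ being $\pm Q/2$) correctly reduces scalar subordinacy to subordinacy of the transfer-matrix vectors. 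However, the step ``any vector with $Q\neq 0$ has nonvanishing component along the expanding light-cone eigendirection and so grows at the maximal rate'' is not an argument for a general, non-constant cocycle: there is no fixed expanding direction, and the pointwise lower bound $\|A_{n}\xi\|^{2}\ge |Q(\xi)|$ does not by itself preclude subordinacy, which is a statement about ratios of local norms. The clean fix stays inside your framework: for $z\in\partial\D$ the antilinear map $C$ sending a solution $w(n)=(w_{1}(n),w_{2}(n))$ to $Cw(n)=\bigl(z^{n}\overline{w_{2}(n)},\,z^{n}\overline{w_{1}(n)}\bigr)$ maps solutions of the Szeg\H{o} recursion to solutions, preserves pointwise norms (hence local norms up to the $O(1)$ you already control), and satisfies $Q(Cw)=-Q(w)$. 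If $Q(w)\neq 0$, then $Cw=\lambda w$ would force $-Q(w)=|\lambda|^{2}Q(w)$, which is impossible, so $Cw$ is linearly independent of $w$ yet has the same local norms; hence $w$ cannot be subordinate. This is the CMV analogue of ``a subordinate Schr\"odinger solution is real up to a scalar.'' With this substitution your treatment of (3)--(5) is complete --- indeed more complete than the paper's, which elides the issue.
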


\begin{proof}

(1) Consider the $m$-function for OPUC of the form
$$
m_{0}^{+}(re^{i\theta}) = \frac{\xi^{0,*}_{+}(re^{i\theta},0,0)}{\xi^{0}_{+}(re^{i\theta},0,0)}.
$$
By Lemma~\ref{le2},
\begin{equation*}
m_{0}^{+}(re^{i\theta}) = \frac{\xi_{+}^{\omega,*}(re^{i\theta},0,0)}{\xi_{+}^{\omega}(re^{i\theta},0,0)}
\end{equation*}
implies
\begin{equation*}
\frac{-\psi^{*}(re^{i\theta},0) + F_{+}(re^{i\theta},0) \varphi^{*}(re^{i\theta},0)}{\psi(re^{i\theta},0) + F_{+}(re^{i\theta},0) \varphi(re^{i\theta},0)} = \frac{-\psi_{\omega}^{*}(re^{i\theta},0) + F^{\omega}_{+}(re^{i\theta},0) \varphi^{*}_{\omega}(re^{i\theta},0)}{\psi_{\omega}(re^{i\theta},0) + F^{\omega}_{+}(re^{i\theta},0) \varphi_{\omega}(re^{i\theta},0)}.
\end{equation*}
For simplicity, write $\varphi_{\omega}(re^{i\theta},0)$ and $\psi_{\omega}(re^{i\theta},0)$ as $\varphi_{\omega}$ and $\psi_{\omega}$, respectively. It follows that
$$
\frac{-1 + F_+(re^{i\theta},0)}{1+F_+(re^{i\theta},0)} = \frac{-\psi_{\omega}^{*} + F_{+}^{\omega}(re^{i\theta},0) \varphi_{\omega}^{*}}{\psi_{\omega}+F_{+}^{\omega}(re^{i\theta},0)\varphi_{\omega}},
$$
and then
$$
F_{+}^{\omega}(re^{i\theta},0) = \frac{-\psi^{*}_{\omega} +\psi_{\omega} - F_{+}(re^{i\theta},0)(\psi_{\omega}^{*}+\psi_{\omega})}{-(\varphi_{\omega}+\varphi_{\omega}^{*}) + F_{+}(re^{i\theta},0)(\varphi_{\omega} - \varphi_{\omega}^{*})}.
$$

By~\eqref{eq21}, we have
\begin{equation}{\label{eq33}}
F^{\omega}_{+}(re^{i\theta},0) =  \frac{i \sin \omega - F_{+}(re^{i\theta},0) \cos \omega}{- \cos\omega + i F_{+}(re^{i\theta},0) \sin \omega}.
\end{equation}

We see that $\lim_{r\uparrow 1}|F^{\omega}_{+}(r e^{i\theta},0)| = \infty$ if and only if $\lim_{r\uparrow 1}F_{+}(r e^{i\theta},0) =  -i \cot \omega$. By Lemma~\ref{le3}, $\lim_{r\uparrow 1}|F^{\omega}_{+}(r e^{i\theta},0)| = \infty$ if and only if $u_{\omega}$ is subordinate at $+\infty$. Putting the two equivalences together, (1) follows.

(2) Consider $m$-function for left half line CMV matrix $\tilde{\CC}_{+}$ as $\tilde{m}^{-}(re^{i\theta}) = \frac{\xi^{0,*}_{-}(re^{i\theta},0,0)}{\xi^{0}_{-}(re^{i\theta},0,0)}$. It implies
\begin{equation*}
\frac{-\psi^{*}(re^{i\theta},0) + M_{-}(re^{i\theta},0) \varphi^{*}(re^{i\theta},0)}{\psi(re^{i\theta},0) + M_{-}(re^{i\theta},0) \varphi(re^{i\theta},0)} = \frac{-\psi_{\omega}^{*}(re^{i\theta},0) + M^{\omega}_{-}(re^{i\theta},0) \varphi^{*}_{\omega}(re^{i\theta},0)}{\psi_{\omega}(re^{i\theta},0) + M^{\omega}_{-}(re^{i\theta},0) \varphi_{\omega}(re^{i\theta},0)}.
\end{equation*}
A direct calculation gives
\begin{equation}{\label{eq34}}
M^{\omega}_{-}(re^{i\theta},0) =  \frac{i \sin \omega - M_{-}(re^{i\theta},0) \cos \omega}{- \cos\omega + i M_{-}(re^{i\theta},0) \sin \omega}.
\end{equation}
Since the $\ell^2$ solution is unique up to a non-zero constant $C$, by \cite[Remark 2.19]{GZ06}, we have
\begin{equation*}
C \xi_{-}(z,0,0) = p_{-}(z,0,0) + F_{-}(z,0) u_{-}(z,0,0),
\end{equation*}
which implies that
\begin{equation}{\label{eq30}}
C (1 + M_{-}(z,0)) = 1 + F_{-}(z,0),
\end{equation}
where $F_{-}(z,0)$ is the Carath\'{e}odory function for $C_{-}$ acting on $\ell^2([0,-\infty)\bigcap\Z)$ with $|\alpha_{0}| = 1$.
Thus, $M_{-}(z,0) = 1 - C^{-1}(1 + F_{-}(z,0))$, which means $\lim_{r\uparrow 1}|M_{-}(re^{i\theta},0)| = \infty$ if and only if
$\lim_{r\uparrow 1}|F_{-}(re^{i\theta},0)| = \infty$.
We see that $\lim_{r\uparrow 1}|M^{\omega}_{-}(r e^{i\theta},0)| = \infty$ if and only if $\lim_{r\uparrow 1}M_{-}(r e^{i\theta},0) =-i \cot \omega$. By Jitomirskaya-Last inequalities, $\lim_{r\uparrow 1}|F^{\omega}_{-}(r e^{i\theta},0)| = \infty$ if and only if $u_{\omega}$ is subordinate at $-\infty$. Putting the three equivalences together, (2) follows.

We observe that (3) and (4) follow immediately from (1) and (2). Recall now that $F_{+}(re^{i\theta},0)$ is a Carath\'{e}odory function and $M_-(re^{i\theta},0)$ is an anti-Carath\'{e}odory function. Thus (5) follows immediately from (3) and (4) upon noting that $\lim_{r\uparrow 1} F_{+}(re^{i\theta},0) = \lim_{r\uparrow 1} M_{-}(re^{i\theta},0)$ forces the common limit to belong to $i(\R\bigcup \set{\infty})$.
\end{proof}

\section{Proof of Theorem~\ref{t.main1rev}}\label{s.5}

In this section we prove Theorem~\ref{t.main1rev}. We will make use of the preparatory work in the previous sections, culminating in Lemmas~\ref{le3} and \ref{le4}.

\begin{proof}[Proof of Theorem~\ref{t.main1rev}]
(a) As discussed in Remark~\ref{r.ppstatement}, this statement is well known and a proof can be given by arguments similar to those in the proof of \cite[Theorem~7.27.(a)]{W80}.

\bigskip

Before turning our attention to the statements (b) and (c) in Theorem~\ref{t.main1rev}, let us recall that $F$ is the Borel transform of the canonical spectral measure $\Lambda$ and it can be expressed as follows,
\begin{align}
F (z) &=  1 + 2 z (G_{00} (z) + G_{11} (z)) \notag\\
      &=  -1 + \frac{(\overline{\alpha}_{0} + 2 z + \alpha_{0} z^2) + (\alpha_{0} z^2 - \overline{\alpha}_{0}) (M_{-}(z,0) + F_{+}(z,0))}{ \rho_{0}^2 z (F_{+}(z,0) - M_{-}(z,0)) } \notag \\
      &\quad +\frac{(\overline{\alpha}_{0} - 2 z + \alpha_{0} z^2) M_{-}(z,0) F_{+}(z,0)} { \rho_{0}^2 z (F_{+}(z,0) - M_{-}(z,0)) }   \label{eq5}\\
      &=  -1 + \frac{\frac{(\overline{\alpha}_{0} + 2 z + \alpha_{0} z^2)}{F_{+}(z,0) M_{-}(z,0)} +  (\alpha_{0} z^2 - \overline{\alpha}_{0}) (\frac{1}{F_{+}(z,0)} + \frac{1}{M_{-}(z,0)}) +(\overline{\alpha}_{0} - 2 z + \alpha_{0} z^2) } { \rho_{0}^2z (\frac{1}{M_{-}(z,0)} - \frac{1}{F_{+}(z,0)}) }. \label{eq6}
\end{align}
From \cite[Section 1.3.5]{S04}, recall that $\Lambda_{\sing}$ is supported on
\begin{equation*}
S_{\Lambda} = \set{\theta : \lim_{r\uparrow 1} \Re F(r e^{i\theta}) = \infty},
\end{equation*}
and an essential support of $\Lambda_{\ac}$ is given by
\begin{equation*}
\CA_{\Lambda} = \set{\theta : 0 < \lim_{r\uparrow 1} \Re F(r e^{i\theta}) < \infty}.
\end{equation*}
With these preliminaries out of the way, we can now address the items (b) and (c) of Theorem~\ref{t.main1rev}.

\bigskip

(b)
It suffices to show that $\Lambda_{\sing} (S_{\Lambda} \backslash S) = 0$. Consider $\theta \in S_{\Lambda}$, that is, $\lim_{r\uparrow 1} \Re F(r e^{i\theta})$ exists and is infinite. In particular, we also have $\lim_{r\uparrow 1} | F(r e^{i\theta})| = \infty$. There are three cases:
\begin{enumerate}
	\item $\lim_{r\uparrow 1} F_{+}(r e^{i\theta},0)$ and $\lim_{r\uparrow 1} M_{-}(r e^{i\theta},0)$ both exist (with $\infty$ being an admissible limit).
	\item Exactly one of $\lim_{r\uparrow 1} F_{+}(r e^{i\theta},0)$ and $\lim_{r\uparrow 1} M_{-}(r e^{i\theta},0)$exists.
	\item Neither of them exists.
\end{enumerate}

We will show that the $\theta$'s that are in case (1) belong to $S$, case (2) is impossible, and the $\theta$'s in case (3) have zero measure with respect to $\Lambda_{s}$. Combining these three statements, we obtain the desired $\Lambda_{\sing}(S_{\Lambda} \backslash S) = 0$.

\textbf{Case (1). } Suppose the limits $\lim_{r\uparrow 1} F_{+}(r e^{i\theta},0)$ and $\lim_{r\uparrow 1} M_{-}(r e^{i\theta},0)$ both exist (with $\infty$ being an admissible limit). By ~(\ref{eq5}), as $r\uparrow 1$, we must have
\begin{equation}{\label{eq7}}
|F_{+} (r e^{i\theta},0) - M_{-} (r e^{i\theta},0)| \rightarrow 0 ,
\end{equation}
or
\begin{align}
&|(\overline{\alpha}_{0} + 2 z + \alpha_{0} z^2) +  (\alpha_{0} z^2 - \overline{\alpha}_{0}) (M_{-}(r e^{i\theta},0) + F_{+}(r e^{i\theta},0)) + \label{eq18}\\
+&(\overline{\alpha}_{0} - 2 z + \alpha_{0} z^2) M_{-} (r e^{i\theta},0)F_{+}(r e^{i\theta},0)| \rightarrow \infty. \notag
\end{align}

Let us consider the first case, $|F_{+} (r e^{i\theta},0) - M_{-} (r e^{i\theta},0)| \rightarrow 0$, which implies that $\lim_{r\uparrow 1}  F_{+}(r e^{i\theta},0) = \lim_{r\uparrow 1} M_{-}(r e^{i\theta},0)$. If $\lim_{r\uparrow 1}  |F_{+}(r e^{i\theta},0)| = \lim_{r\uparrow 1} |M_{-}(r e^{i\theta},0)|$ is finite, by part (5) of Lemma~\ref{le4}, there is a solution $u_{\omega}$ that is subordinate at both $\pm \infty$ with non-zero $\omega$. If $\lim_{r\uparrow 1}  |F_{+}(r e^{i\theta},0)| = \lim_{r\uparrow 1} |M_{-}(r e^{i\theta},0)|$ is infinite, $\lim_{r\uparrow 1}|F_{-} (r e^{i\theta},0)|$ is infinite by \eqref{eq30}. Thus, $u_{+}(e^{i\theta})$ is a subordinate solution at $\infty$ and $u_{-}(e^{i\theta})$ is a subordinate solution at $-\infty$. From Lemma~\ref{le4} (1) and (2), we have $F_{+} (r e^{i\theta},0)= -i\cot\omega_{1}$ and $M_{-} (r e^{i\theta},0) = -i\cot\omega_{2}$, so $\omega_{1}=\omega_{2}=0$, which means there is a subordinate solution $u_{0}(e^{i\theta})$ at both $\pm\infty$.

Now, we consider the second case, which means $|M_{-} (r e^{i\theta},0) F_{+} (r e^{i\theta},0)| \rightarrow \infty$, $|F_{+} (r e^{i\theta},0)| \rightarrow \infty$ or $|M_{-} (r e^{i\theta},0)| \rightarrow \infty$.
In this case, if  one of $|M_{-} (r e^{i\theta},0)|$ and $|F_{+} (r e^{i\theta},0)|$ goes to infinity while the other one goes to a constant, then $|F(r e^{i \theta})|$ cannot go to infinity.

Therefore, both $|M_{-} (r e^{i\theta},0)|$ and $|F_{+} (r e^{i\theta},0)|$ go to infinity. As in the first case, there is a subordinate solution $u_{0}(e^{i\theta})$ at both $\pm\infty$.

\textbf{Case (2). } Suppose exactly one of $\lim_{r\uparrow 1} F_{+}(r e^{i\theta},0)$ and $\lim_{r\uparrow 1} M_{-}(r e^{i\theta},0)$ exists, say $\lim_{r\uparrow 1} F_{+}(r e^{i\theta},0)$.
As in case (1), one of \eqref{eq7} and \eqref{eq18} must hold true.
First, consider the case where \eqref{eq7} holds. It implies that if $\lim_{r\uparrow 1} | F_{+}(r e^{i\theta},0)|$ exists, then $\lim_{r\uparrow 1} | M_{-}(r e^{i\theta},0)|$ also exists, which is a contradiction.
Now, consider the case where \eqref{eq18} holds, which implies $\lim_{r\uparrow 1} | F_{+}(r e^{i\theta},0)| = \infty$.
By ~(\ref{eq6}), $| M_{-}(r e^{i\theta},0)|$ must go to infinity as $r\uparrow 1$, which is a contradiction.
Thus, case (2) is impossible.

\textbf{Case (3). } Suppose that neither $\lim_{r\uparrow 1} F_{+}(r e^{i\theta},0)$ nor $\lim_{r\uparrow 1} M_{-}(r e^{i\theta},0)$ exists. Therefore, $\liminf_{r\uparrow 1} F_{+}(r e^{i\theta},0)$ and $\liminf_{r\uparrow 1} M_{-}(r e^{i\theta},0)$ are finite. We can thus choose a sequence $r_{n}\uparrow 1$ in such a way that the limits
\begin{equation*}
\ell_{+}:=\lim_{r_{n}\uparrow 1} F_{+}(r_n e^{i\theta},0) \textrm{ and } \ell_{-}= \lim_{r_{n}\uparrow 1} M_{-}(r_n e^{i\theta},0)
\end{equation*}
exist, and $\ell_+$ is finite.
 Following a similar argument to that in Case 1, since $\lim_{r \uparrow 1}|F(r e^{i\theta})| = \infty$, $\ell_{-} = \ell_{+} \in i \R $, which means that all accumulation points of $\set{F_{+}(r e^{i\theta},0)}_{0<r<1}$ and $\set{M_{-}(r e^{i\theta},0)}_{0<r<1}$ are either $0$ or purely imaginary numbers. Thus $\Re \ell_{+} = 0$.

Since $M_{-}(r e^{i\theta},0)$ is an anti-Caratheodory function, it is analytic in $\D$. If $ia, ib \in i \R$ are two different accumulation points of $M_{-}(r e^{i\theta},0)$, by the intermediate value theorem and the fact that all accumulation points are imaginary, we can find a sequence of $\set{r^{\prime}_n}$ such that $M_{-}(r^{\prime}_n e^{i\theta},0)$ goes to $ic$ for any $c \in (a,b)$ as $r^{\prime}_n \uparrow 1$. We thus observe that since the limits do not exist and the real parts of accumulation points are zero,  $M_{-}(r e^{i\theta},0)$ has uncountably many accumulation points.
	
So we can choose $r_n$ and $t_n$ in such a way that
\begin{align*}
\lim_{r_n \uparrow 1} M_{-} (r_n e^{i\theta},0) = \ell_1,\quad & \lim_{t_n \uparrow 1} M_{-} (t_n e^{i\theta},0) = \ell_2\\
\lim_{r_n \uparrow 1} F_{+} (r_n e^{i\theta},0) = \ell_1,\quad & \lim_{t_n \uparrow 1} F_{+} (t_n e^{i\theta},0) = \ell_2,
\end{align*}
where $\ell_{1} \neq \ell_{2}$ and $\ell_{1},\ell_{2} \in i\R$.
Let $\Lambda_{0}$ denote the $\delta_0$-spectral measure of $\mathcal{E}$, given by
\begin{equation*}
\langle \delta_0, (\mathcal{E}+ z I)(\mathcal{E} - zI)^{-1}\delta_0 \rangle = \int_{\partial \D} \frac{e^{i\theta} + z}{e^{i\theta} -z} d \Lambda_{0} : = M_{00}, \quad z\in \C\backslash \partial \D.
\end{equation*}
By \cite[(3.25)]{GZ06}, $M_{00}$ is of the form
\begin{equation*}
M_{00}(z) = 1 + \frac{[1- \alpha_0 - (1 + \alpha_0) F_{+}(z,0)] [1 - \overline{\alpha}_{0} + (1 + \overline{\alpha}_{0} M_{-}(z,0))]}
{\rho_0^2 (F_{+}(z,0) - M_{-}(z,0))}.
\end{equation*}
Defining $\Lambda_1$ and $M_{11}$ similarly, the canonical spectral measure is $\Lambda = \Lambda_0 + \Lambda_1$.
Since $\Lambda_0 \ll \Lambda$ and $F(z) = \int \frac{e^{i\theta} + z}{e^{i\theta} - z} d\Lambda(\theta)$, the corresponding Radon-Nikodym derivative satisfies
\begin{equation}{\label{eq24}}
\frac{d\Lambda_0}{d\Lambda} (\theta)= \lim_{r\uparrow 1}  \frac{M_{00}(r e^{i\theta})}{F(r e^{i\theta})},
\end{equation}
for $\Lambda_s$-almost every $\theta$.

	This follows from Poltoratskii's theorem \cite{P94}, \cite{JL04}. We are grateful to Jake Fillman for the idea to use Poltoratskii's theorem to simplify this part of the proof.  The version we use can be found in \cite[Remark after Proposition 3.1]{S06}, which says that for any complex Borel measure $\mu$ on $\partial\D$ and any $g\in L^{1}(\partial \D, d\mu)$ we have, for almost any $e^{i \theta^{\prime}}$ with respect to $d\mu_{\sing}$ (but not for $d\mu_{\ac}$), that
	$$
	\lim_{r\uparrow 1}\frac{\int_{\partial \D} \frac{e^{i\theta} + r e^{i\theta^{\prime}}}{e^{i\theta} -r e^{i\theta^{\prime}}}f(\theta) d \mu(\theta)}{\int_{\partial \D} \frac{e^{i\theta} + r e^{i\theta^{\prime}}}{e^{i\theta} -r e^{i\theta^{\prime}}} d \mu(\theta)} = f(\theta^{\prime}).
	$$

Due to our choices of $r_n, t_n$ and $\ell_{1} \neq \ell_{2}$, we obtain
\begin{equation*}
\lim_{n \rightarrow \infty}  \frac{M_{00}(r_n e^{i\theta})}{F(r_n e^{i\theta})} \neq
\lim_{n \rightarrow \infty}  \frac{M_{00}(t_n e^{i\theta})}{F(t_n e^{i\theta})}.
\end{equation*}
Thus, the $\theta$'s in case (3) such that ~(\ref{eq24}) fails and the set has zero $\Lambda_s$-measure.

\bigskip

(c) Since $\Lambda_{\ac}$ gives zero weight to sets of zero Lebesgue measure and $N_{\Lambda}$ is an essential support, it suffices to show that
\begin{equation}{\label{eq11}}
\Leb(\CA \backslash \CA_{\Lambda}) = 0
\end{equation}
and
\begin{equation}{\label{eq14}}
\Leb(\CA_{\Lambda} \backslash \CA) = 0.
\end{equation}
Assume that $\lim_{r \uparrow 1} F (r e^{i\theta})$, $\lim_{r \uparrow 1} F_{+} (r e^{i\theta},0)$ and $\lim_{r \uparrow 1} M_{-}(r e^{i\theta},0)$ all have finite boundary values as $r\uparrow 1$ for all $z=e^{i\theta}$ in question.

To prove (\ref{eq11}), we consider $\theta \in \CA_{\Lambda}$ for which $\lim_{r \uparrow 1}| F (r e^{i\theta})|$, $\lim_{r \uparrow 1} |F_{+} (r e^{i\theta},0)|$ and $\lim_{r \uparrow 1} |M_{-}(r e^{i\theta},0)|$ exist and are finite, and show that $z=e^{i\theta} \in \CA$.

Let us first consider the possibility that $\lim_{r \uparrow 1} F_{+} (r e^{i\theta},0)$ and $\lim_{r \uparrow 1} M_{-}(r e^{i\theta},0)$ are both purely imaginary. Specifically, let $\lim_{r \uparrow 1} F_{+} (r e^{i\theta},0) = i a$ and $\lim_{r \uparrow 1} M_{-} (r e^{i\theta},0) = i b$, where $a\neq b \in\mathbb{R}$.
Since $F(r e^{i\theta}) = 1 + 2 z (G_{00} +G_{11})$, we firstly consider $1 + 2zG_{00}$.
\begin{align*}
1 + 2zG_{00} = & \frac{i(a - b) - (-1 + ia)(1 + ib)}{i (a - b)}\\
             = & \frac{1+ab}{i (a -b)},
\end{align*}
which is a purely imaginary number.
Consider $2zG_{11}$, which is
\begin{align*}
2zG_{11} & = - \frac{[z + \overline{\alpha}_0 + ib (z-\overline{\alpha}_{0})][-1-\alpha_0 z + ia(1 - \alpha_0 z)]}{i \rho_0^2 z (a - b)}\\
         & = \frac{(z + \overline{\alpha}_0) (1 + \alpha_0 z) + ab(z-\overline{\alpha}_{0}) (1 - \alpha_0 z)}{i \rho_0^2 z (a - b)} \\
         & \quad + \frac{i [b(z-\overline{\alpha}_{0})(1+\alpha_{0}z)- a (z + \overline{\alpha}_0)(1-\alpha_0 z)]}{i \rho_0^2 z (a - b)}\\
         & = \frac{z+\overline{\alpha}_0 +\alpha_{0}z^2 + |\alpha_{0}|^2z + abz-\overline{\alpha}_{0}ab - \alpha_0 ab z^2 + ab|\alpha_0|^2 z}{i\rho_0^2 z (a - b)} \\
         & \quad + \frac{i[bz - b\overline{\alpha}_0 + b\alpha_0 z^2 - b|\alpha_0|^2 z - az - a \overline{\alpha}_0 + a \alpha_0 z^2 + a|\alpha_0|^2 z]}{i\rho_0^2 z (a - b)}\\
         & = \frac{1+\overline{\alpha}_0 z^{-1} +\alpha_{0}z + |\alpha_{0}|^2 + ab -\overline{\alpha}_{0}abz^{-1} - \alpha_0 ab z + ab|\alpha_0|^2 }{i\rho_0^2  (a - b)} \\
         & \quad + \frac{i[b - b\overline{\alpha}_0z^{-1} + b\alpha_0 z - b|\alpha_0|^2  - a - a \overline{\alpha}_0z^{-1} + a \alpha_0 z + a|\alpha_0|^2 ]}{i\rho_0^2 (a - b)}.
\end{align*}
Thus,
\begin{align*}
\Re(-2\rho_0^2 (a-b)z G_{11}) & = \Re(i \overline{\alpha}_0 z^{-1} + i\alpha_{0} z - iab\overline{\alpha}_{0}z^{-1} - i ab\alpha_0 z + (a-b)\rho_0^2\\
                              & \qquad + b \overline{\alpha}_0 z^{-1} - b\alpha_{0}z + a\overline{\alpha}_0 z^{-1} - a\alpha_0z)\\
                              & = (1 - ab) \Re(i \overline{\alpha}_0 z^{-1} + i \alpha_0 z) +(a + b) \Re(\overline{\alpha}_0z^{-1} - \alpha_0 z) \\
                              & \qquad + (a-b) \rho_0^2.
\end{align*}
With $z=re^{i\theta}$, we have
$$
\Re(i\overline{\alpha}_0 r^{-1}e^{-i\theta} + i \alpha_0 re^{i\theta}) \rightarrow \Re(i (\overline{\alpha_0 e^{i\theta}}+\alpha_0 e^{i\theta}))=0
$$
and
$$
\Re(\overline{\alpha}_0r^{-1}e^{-i\theta} - \alpha_0 re^{i\theta}) \rightarrow \Re(\overline{\alpha_0 e^{i\theta}}- \alpha_0 e^{i\theta}) =0,
$$
as $r\uparrow 1$.

Putting these together, we have $\lim_{r \uparrow 1} \Re F (r e^{i\theta}) = -1$, which is impossible. If $a=b$, $|\lim_{r \uparrow 1} F (r e^{i\theta})|=\infty$, which is also impossible. We conclude that $\lim_{r \uparrow 1} F_{+} (r e^{i\theta},0)$ and $\lim_{r \uparrow 1} M_{-}(r e^{i\theta},0)$ cannot both be purely imaginary.

Without loss of generality, we assume that $\lim_{r \uparrow 1} F_{+} (r e^{i\theta},0)$ is not a purely imaginary number, which implies there is no subordinate solution at $+\infty$ by Lemma \ref{le4} (3). Thus, $z=e^{i\theta} \in \CA_{+}$. Therefore, $z=e^{i\theta}\in\CA$.

To prove \eqref{eq14}, we consider $z=e^{i\theta} \in \CA$ for which $\lim_{r \uparrow 1} |F (r e^{i\theta})|$, $\lim_{r \uparrow 1} |F_{+} (r e^{i\theta},0)|$ and $\lim_{r \uparrow 1} |M_{-}(r e^{i\theta},0)|$ exist and are finite, and show that $\theta \in \CA_{\Lambda}$. Without loss of generality, assume that $z=e^{i\theta}\in \CA_{+}$, so \eqref{eq1} has no solution that is subordinate at $+\infty$. Then Lemma \ref{le4} implies that $\lim_{r \uparrow 1} F_{+} (r e^{i\theta},0)$ cannot be purely imaginary, which implies that $0<\lim_{r \uparrow 1} \Re F_{+} (r e^{i\theta},0)<\infty$. Since $\lim_{r \uparrow 1} M_{-} (r e^{i\theta},0)$ also exists and must belong to $\mathbb{C}_{\ell} = \set{z\in\mathbb{C}|\Re(z)<0}$, we have $0<\lim_{r \uparrow 1} \Re F (r e^{i\theta})<\infty$. Therefore, $\theta\in\CA_{\Lambda}$.
\end{proof}

\section{Proof of the Corollaries}\label{s.6}

In this section we prove Corollaries~\ref{c.bddsol} and \ref{c.ddvcbddc}.

\begin{proof}[Proof of Corollary~\ref{c.bddsol}]
Suppose that $z \in \mathcal{B}_+$, that is, $z \in \partial \D$ is such that
$$
\sup_{n \in \Z_\pm} \|A(n,z)\| < \infty.
$$
By an extension of the discussion in the proof of \cite[Corollary~10.8.4]{S05}, it follows from the Jitomirskaya-Last inequality that \eqref{eq1} has no solution that is subordinate at $+ \infty$, and hence $z \in \mathcal{A}_+$.
Specifically, due to \eqref{eq12}--\eqref{eq28}, we have
\begin{align*}
&|u_+(z,n)|=|\varphi_n(z)| \textrm{ and }|p_+(z,n)|=|\psi_n(z)|\quad \textrm{ when } n \textrm{ is even,} \\
&|v_+(z,n)|=|\varphi_n(z)| \textrm{ and }|q_+(z,n)|=|\psi_n(z)|\quad \textrm{ when } n \textrm{ is odd.}
\end{align*}
Let $c=\sup_{n \in \Z_+} \|A(n,z)\|$. Then $|u_+(z,n)| \le c$ if $n$ is even and $|v_+(z,n)| \le c$ if $n$ is odd. By \cite[(3.2.23)]{S05}, $|p_+(z,n)| \le c^{-1}$ if $n$ is even and $|q_+(z,n)| \le c^{-1}$ if $n$ is odd. Thus,
\begin{align*}
&c^{-2} \le \frac{\|u_{+}(z,n)\|_{x(r)}}{\|p_{+}(z,n)\|_{x(r)}} \le c^2, \textrm{ when } n \textrm{ is even,}\\
&c^{-2} \le \frac{\|v_{+}(z,n)\|_{x(r)}}{\|q_{+}(z,n)\|_{x(r)}} \le c^2, \textrm{ when } n \textrm{ is odd.}
\end{align*}
It follows that $\mathcal{B}_+ \subseteq \mathcal{A}_+$.

The inclusion $\mathcal{B}_- \subseteq \mathcal{A}_-$ is proved in a similar way.
In section 4, we have proved $\CC_{-}$ conjugates to $\tilde{\CC}_{+}$ with $-\overline{\alpha}_{-(n+2)} = \tilde{\alpha}_{n}$ for $n\in \N$. Hence, we can rewrite the representation of $A(n,z)$ for $n \le -1$ as
$$
S(\tilde{\alpha}_{-n},z) \times S(\tilde{\alpha}_{-n+1},z) \times \cdots \times S(\tilde{\alpha}_{0},z).
$$
From the above statements, $\tilde{\CC}_{+}$ has no solution that is subordinate at $+\infty$. Due to the conjugation, \eqref{eq1} has no solution that is subordinate at $-\infty$. It follows that $\mathcal{B}_- \subseteq \mathcal{A}_-$.

The fact that the restriction of $\Lambda$ to each of $\mathcal{B}_{\pm}$ is purely absolutely continuous now follows from part (c) of Theorem~\ref{t.main1rev}.
\end{proof}

\begin{proof}[Proof of Corollary~\ref{c.ddvcbddc}]
If $z \in \mathcal{R}$, then it follows from the definition of $\mathcal{R}$ that there exist $B_z : \Omega \to \mathrm{SU}(1,1)$ bounded and $A_z^{(0)} \in \mathrm{SU}(1,1)$ elliptic such that for every $\omega \in \Omega$, we have $A_z(\omega) = B_z(T \omega) A_z^{(0)} B_z(\omega)^{-1}$. This in turn shows that for $n \ge 2$, we have
\begin{align*}
& A_z(T^{n-1} \omega) \times \cdots \times A_z(T \omega) A_z(\omega) \\
& = B_z(T^n \omega) A_z^{(0)} B_z(T^{n-1} \omega)^{-1} \times \cdots \times B_z(T^2 \omega) A_z^{(0)} B_z(T\omega)^{-1} B_z(T \omega) A_z^{(0)} B_z(\omega)^{-1} \\
& = B_z(T^n \omega) \left(A_z^{(0)}\right)^n B_z(\omega)^{-1}.
\end{align*}
Since the matrix on the left-hand side has the same norm as $A(n,z;\omega)$ (it is obtained from that matrix by multiplication with the unimodular number $z^{-n/2}$) and the right-hand side remains bounded as $n \to \infty$ since $A_z^{(0)}$ is elliptic and $B_z : \Omega \to \mathrm{SU}(1,1)$ is bounded, we find that
\begin{equation}\label{e.c23proofincl}
\mathcal{R} \subseteq \mathcal{B}_+ (\omega).
\end{equation}
We note in passing that a similar analysis applies on the left half line and yields $\mathcal{R} \subseteq \mathcal{B}_- (\omega)$.

Here we denote the matrices $A(n,z)$ and sets $\mathcal{B}_\pm$ introduced earlier for a fixed extended CMV matrix $\mathcal{E}$ by $A(n,z;\omega)$ and $\mathcal{B}_\pm (\omega)$ if they are associated with the dynamically defined extended CMV matrix $\mathcal{E}(\omega)$.

The assertion of Corollary~\ref{c.ddvcbddc} now follows from \eqref{e.c23proofincl}, which as discussed above holds for every $\omega \in \Omega$, and Corollary~\ref{c.bddsol}.
\end{proof}

\end{document}